\newtheorem{definition}{Definition}[section]
\newtheorem{proposition}[definition]{Proposition}
\newtheorem{lemma}[definition]{Lemma}
\newtheorem{theorem}[definition]{Theorem}
\newtheorem{corollary}[definition]{Corollary}
\newtheorem{remark}[definition]{Remark}
\title{Channel surfaces in Lie sphere geometry}
\author{Mason Pember and Gudrun Szewieczek}
\address[M.~Pember]{Vienna University of Technology,
Wiedner Hauptstra\ss e 8-10/104, A-1040 Vienna. Austria.}
\email{mason@geometrie.tuwien.ac.at}
\address[G.~Szewieczek]{Vienna University of Technology,
Wiedner Hauptstra\ss e 8-10/104, A-1040 Vienna. Austria.}
\email{gudrun@geometrie.tuwien.ac.at}
\begin{document}

\maketitle
\begin{abstract}
We discuss channel surfaces in the context of Lie sphere geometry and characterise them as certain $\Omega_{0}$-surfaces. Since $\Omega_{0}$-surfaces possess a rich transformation theory, we study the behaviour of channel surfaces under these transformations. Furthermore, by using certain Dupin cyclide congruences, we characterise Ribaucour pairs of channel surfaces. 
\end{abstract}

\section{Introduction}
Channel surfaces, that is, envelopes of 1-parameter families of spheres, have been intensively studied for many years. Although these surfaces are a classical notion (e.g., \cite{B1929,L1872,M1850}), they are also a subject of interest in recent research. For example, in~\cite{H2003} channel surfaces were studied in the context of M\"{o}bius geometry, channel linear Weingarten surfaces were characterised in \cite{HMT2015} and the existence of a rational parametrisation was investigated in \cite{PP1997}.  Moreover, channel surfaces are widely used in Computer Aided Geometric Design.

In this paper, following the example of~\cite{B1929}, we study these surfaces in Lie sphere geometry using the hexaspherical coordinate model introduced by Lie \cite{L1872}. Applying the gauge theoretic approach of \cite{BHPR2017,C2012i,P2017}, we show that Legendre immersions parametrising channel surfaces are the $\Omega_0$-surfaces that admit a linear conserved quantity. This approach lends itself well to studying the transformation theory that $\Omega_{0}$-surfaces possess. We give special attention to the Lie-Darboux transformation of $\Omega_{0}$-surfaces, a particular Ribaucour transformation. We show that any Lie-Darboux transform of a channel surface is again a channel surface. Furthermore, after choosing the appropriate $\Omega_{0}$-structures, any Ribaucour pair of channel surfaces (with corresponding circular curvature lines) is a Lie-Darboux pair.     

We characterise Ribaucour pairs of umbilic-free Legendre immersions in terms of a special pair of Dupin cyclide congruences enveloping both surfaces. In Section~\ref{sec:rib} we investigate the behaviour of these Dupin cyclide congruences when both Legendre immersions participating in this Ribaucour pair parametrise channel surfaces. In a similar vein, given a pair of sphere curves, we construct two 1-parameter families of Dupin cyclides whose coincidence determines when the envelopes of the sphere curves form a Ribaucour pair (with corresponding circular curvature lines). 

In Section~\ref{sec:sym} we apply our theory of channel surfaces to the special case of curves in conformal geometry. We recover a result of~\cite{BH2006}, showing how the classical notion of Ribaucour transforms of curves is related to the Ribaucour transforms of Legendre immersions parametrising these curves. 

\textit{Acknowledgements.} The authors would like to give special thanks to Professor Udo Hertrich-Jeromin, who encouraged them to embark on this project and provided many insightful comments. They would also like to thank Professor Francis Burstall for his valuable feedback regarding this paper. This research project began whilst the first author was an International Research Fellow of the Japan Society for the Promotion of Science (JSPS) and continued with the support of the Austrian Science Fund (FWF) through the research project P28427-N35 ``Non-rigidity and symmetry breaking". The second author is grateful for financial support from the grant of the FWF/JSPS-Joint project I1671-N26 ``Transformations and Singularities'', which gave her the possibility to visit Kobe University, where the crucial ideas of this paper were developed.

\section{Preliminaries} 

Given a vector space $V$ and a manifold $\Sigma$, we shall denote by $\underline{V}$ the trivial bundle $\Sigma\times V$. Given a vector subbundle $W$ of $\underline{V}$, we define the derived bundle of $W$, denoted $W^{(1)}$, to be the subset of $\underline{V}$ consisting of the images of sections of $W$ and derivatives of sections of $W$ with respect to the trivial connection on $\underline{V}$. In this paper, most of the derived bundles that appear will be vector subbundles of the trivial bundle, but in general this is not always the case as, for example, the rank of the derived bundle may not be constant over $\Sigma$. 

Throughout this paper we shall be considering the pseudo-Euclidean space $\mathbb{R}^{4,2}$, i.e., a 6-dimensional vector space equipped with a non-degenerate symmetric bilinear form $(\,,\,)$ of signature $(4,2)$. Let $\mathcal{L}$ denote the lightcone of $\mathbb{R}^{4,2}$. According to Lie's~\cite{L1872} correspondence, points in the projective lightcone $\mathbb{P}(\mathcal{L})$ correspond to spheres in any three dimensional space form. A detailed modern account of this is given in~\cite{C2008}. Given a manifold $\Sigma$ we then have that any smooth map $s:\Sigma\to \mathbb{P}(\mathcal{L})$ corresponds to a sphere congruence in any space form. We shall thus refer to $s$ as a~\textit{sphere congruence}. Such a map can also be identified as a smooth rank 1 null subbundle of the trivial bundle $\underline{\mathbb{R}}^{4,2}$. 

The orthogonal group $\textrm{O}(4,2)$ acts transitively on $\mathcal{L}$ and thus acts transitively on $\mathbb{P}(\mathcal{L})$. In~\cite{C2008} it is shown that $\textrm{O}(4,2)$ is a double cover for the group of Lie sphere transformations. The Lie algebra $\mathfrak{o}(4,2)$ of $\textrm{O}(4,2)$ is well known to be isomorphic to the exterior algebra $\wedge^{2}\mathbb{R}^{4,2}$ via the identification 
\[ a\wedge b\, (c) = (a,c)b - (b,c)a,\]
for $a,b,c\in\mathbb{R}^{4,2}$. We shall frequently use this fact throughout this paper. 

Given a manifold $\Sigma$, we define the following product of two vector-valued 1-forms $\omega_{1},\omega_{2}\in\Omega^{1}(\underline{\mathbb{R}}^{4,2})$:
\[ \omega_{1}\curlywedge \omega_{2}(X,Y) := \omega_{1}(X)\wedge \omega_{2}(Y) - \omega_{1}(Y)\wedge \omega_{2}(X),\]
for $X,Y\in\Gamma T\Sigma$. Hence, $\omega_{1}\curlywedge \omega_{2}$ is a $2$-form taking values in $\wedge^{2}\underline{\mathbb{R}}^{4,2}$. Notice that $\omega_{1}\curlywedge \omega_{2} = \omega_{2}\curlywedge \omega_{1}$. 

Recall that we also have the following product for two $\mathfrak{so}(4,2)$-valued 1-forms $A,B\in \Omega^{1}(\mathfrak{so}(4,2))$:
\[ [A\wedge B](X,Y) = [A(X),B(Y)]-[A(Y),B(X)],\]
for $X,Y\in \Gamma T\Sigma$.

\subsection{Legendre maps}
Let $\mathcal{Z}$ denote the Grassmannian of isotropic 2-dimensional subspaces of $\mathbb{R}^{4,2}$.
Suppose that $\Sigma$ is a 2-dimensional manifold and let $f:\Sigma\to \mathcal{Z}$ be a smooth map. By viewing $f$ as a 2-dimensional subbundle of the trivial bundle $\underline{\mathbb{R}}^{4,2}$, we may define a tensor, analogous to the solder form defined in~\cite{BC2004,BR1990}, 
\[ \beta: T\Sigma \to Hom(f,f^{(1)}/f),\quad X\mapsto (\sigma \mapsto d_{X}\sigma \, mod\, f).\]
In accordance with~\cite[Theorem 4.3]{C2008} we have the following definition:

\begin{definition}
A map $f:\Sigma\to\mathcal{Z}$ is a Legendre map if it satisfies the contact condition, $f^{(1)}\le f^{\perp}$, and the immersion condition, $\ker\beta =\{0\}$. 
\end{definition}

\begin{remark}
\label{rem:legder}
The contact and immersion conditions together imply that $f^{(1)}=f^{\perp}$ (see~\cite{P1985}). 
\end{remark}

Note that $f^{\perp}/f$ is a rank $2$ subbundle of $\underline{\mathbb{R}}^{4,2}/f$ that inherits a positive definite metric from $\mathbb{R}^{4,2}$. 

\begin{definition}
\label{def:curvsph}
Let $p\in\Sigma$. Then a 1-dimensional subspace $s(p)\le f(p)$ is a curvature sphere of $f$ at $p$ if there exists a non-zero subspace $T_{s(p)}\le T_{p}\Sigma$ such that $\beta(T_{s(p)})s(p) = 0$. We call the maximal such $T_{s(p)}$ the curvature space of $s(p)$. 
\end{definition}

It was shown in~\cite{P1985} that at each point $p$ there is either one or two curvature spheres. We say that $p$ is an \textit{umbilic point of $f$} if there is exactly one curvature sphere $s(p)$ at $p$ and in that case $T_{s(p)}=T_{p}\Sigma$. 

Away from umbilic points we have that the curvature spheres form two rank 1 subbundles $s_{1},s_{2}\le f$ with respective curvature subbundles $T_{1}=\bigcup_{p\in \Sigma}T_{s_{1}(p)}$ and $T_{2}=\bigcup_{p\in \Sigma}T_{s_{2}(p)}$. We then have that $f=s_{1}\oplus s_{2}$ and $T\Sigma = T_{1}\oplus T_{2}$. A conformal structure $c$ is induced on $T\Sigma$ as the set of all indefinite metrics whose null lines are $T_{1}$ and $T_{2}$. This conformal structure induces a Hodge-star operator $\star$ that acts as $id$ on $T^{*}_{1}$ and $-id$ on $T^{*}_{2}$. 

Suppose that $f$ is umbilic-free. Then for each curvature subbundle $T_{i}$ we may define a rank 3 subbundle $f_{i}\le f^{\perp}$ as the set of sections of $f$ and derivatives of sections of $f$ along $T_{i}$. 
One can check that given any non-zero section $\sigma\in \Gamma f$ such that $\langle\sigma\rangle\cap s_{i} = \{0\}$ we have that
\[ f_{i} = f\oplus d\sigma(T_{i}).\]
Furthermore, 
\[ f^{\perp}/f = f_{1}/f\oplus_{\perp} f_{2}/f,\]
and each $f_{i}/f$ inherits a positive definite metric from that of $\mathbb{R}^{4,2}$. 

\begin{lemma}
\label{lem:innprods}
Let $X\in \Gamma T_{1}$ and $Y\in \Gamma T_{2}$ be nowhere zero. Then for any sections $\sigma,\tilde{\sigma}\in \Gamma f$,
$(d_{X}\sigma, d_{X}\tilde{\sigma})=0$ (or, $(d_{Y}\sigma, d_{Y}\tilde{\sigma})=0$) if and only if either $\sigma\in \Gamma s_{1}$ or $\tilde{\sigma}\in \Gamma s_{1}$ (respectively, $\sigma\in \Gamma s_{2}$ or $\tilde{\sigma}\in \Gamma s_{2}$).
\end{lemma}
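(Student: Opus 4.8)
The plan is to work in a local frame adapted to the curvature sphere decomposition and reduce the inner product to a single scalar factor whose sign is pinned down by positive-definiteness. Fix nowhere-zero local sections $\sigma_{1}\in\Gamma s_{1}$ and $\sigma_{2}\in\Gamma s_{2}$, so that $\{\sigma_{1},\sigma_{2}\}$ frames $f=s_{1}\oplus s_{2}$, and write $\sigma = a\sigma_{1}+b\sigma_{2}$ and $\tilde{\sigma}=\tilde{a}\sigma_{1}+\tilde{b}\sigma_{2}$ for functions $a,b,\tilde{a},\tilde{b}$ on $\Sigma$.

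First I would compute $d_{X}\sigma$ modulo $f$ for $X\in\Gamma T_{1}$. Upon differentiating, the terms $(d_{X}a)\sigma_{1}$ and $(d_{X}b)\sigma_{2}$ lie in $f$ and so vanish in the quotient, while $a\,d_{X}\sigma_{1}\equiv 0 \pmod f$ because $X$ lies in the curvature space $T_{1}=T_{s_{1}}$, i.e.\ $\beta(X)\sigma_{1}=0$. This leaves
\[ \beta(X)\sigma = b\,\beta(X)\sigma_{2}\in\Gamma(f_{1}/f),\]
and likewise $\beta(X)\tilde{\sigma}=\tilde{b}\,\beta(X)\sigma_{2}$. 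Next I would use that the metric of $\mathbb{R}^{4,2}$ descends to $f^{\perp}/f$, which is legitimate because $f$ is isotropic and, by Remark~\ref{rem:legder}, $f^{(1)}=f^{\perp}$; hence each $d_{X}\sigma$ is a section of $f^{\perp}$ and the pairing $(d_{X}\sigma,d_{X}\tilde{\sigma})$ depends only on the classes modulo $f$. Consequently
\[ (d_{X}\sigma,d_{X}\tilde{\sigma}) = b\,\tilde{b}\,(\beta(X)\sigma_{2},\beta(X)\sigma_{2}),\]
the inner product now being evaluated in $f_{1}/f$.

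The crux, and the step I expect to be the main obstacle, is to show that the scalar $(\beta(X)\sigma_{2},\beta(X)\sigma_{2})$ is strictly positive. This combines two facts: the metric induced on $f_{1}/f$ is positive definite (as recorded just before the Lemma), and $\beta(X)\sigma_{2}\neq 0$. The latter follows from the immersion condition $\ker\beta=\{0\}$: were $\beta(X)\sigma_{2}=0$, then together with $\beta(X)\sigma_{1}=0$ we would obtain $\beta(X)f=0$, forcing the nowhere-zero field $X$ into $\ker\beta$, a contradiction. Granting positivity, the displayed identity shows that $(d_{X}\sigma,d_{X}\tilde{\sigma})=0$ if and only if $b\,\tilde{b}=0$, that is $b=0$ or $\tilde{b}=0$, which is precisely the condition $\sigma\in\Gamma s_{1}$ or $\tilde{\sigma}\in\Gamma s_{1}$.

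Finally, the assertion for $Y\in\Gamma T_{2}$ and $s_{2}$ is obtained verbatim by interchanging the roles of the two curvature sphere subbundles: one expands $d_{Y}\sigma\equiv a\,\beta(Y)\sigma_{1}\pmod f$, works in the positive-definite bundle $f_{2}/f$ in place of $f_{1}/f$, and invokes $\beta(Y)\sigma_{1}\neq 0$ by the same $\ker\beta=\{0\}$ argument.
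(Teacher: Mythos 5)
Your proof is correct and takes essentially the same route as the paper's: expand $\sigma,\tilde{\sigma}$ in the frame $\{\sigma_{1},\sigma_{2}\}$, reduce the pairing modulo the isotropic bundle $f$ to $b\,\tilde{b}\,(d_{X}\sigma_{2},d_{X}\sigma_{2})$, and conclude from positive definiteness of the induced metric on the quotient. You merely make explicit two points the paper leaves implicit --- that the pairing descends to $f^{\perp}/f$ via the contact condition, and that $\beta(X)\sigma_{2}\neq 0$ by the immersion condition $\ker\beta=\{0\}$ --- which is welcome extra care rather than a different argument.
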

\begin{proof}
Let $\sigma_{1}\in \Gamma s_{1}$ and $\sigma_{2}\in \Gamma s_{2}$ be lifts of the curvature sphere congruences. Then we may write $\sigma = \alpha \sigma_{1} + \beta \sigma_{2}$ and  $\tilde{\sigma} = \gamma \sigma_{1} + \delta \sigma_{2}$, for some smooth functions $\alpha, \beta, \gamma ,\delta$. Then 
\[ (d_{X}\sigma, d_{X}\tilde{\sigma}) = \beta\delta (d_{X}\sigma_{2}, d_{X}\sigma_{2}),\]
since $d_{X}\sigma_{1}\in \Gamma f$. Since $f_{2}/f$ inherits a positive definite metric from $\mathbb{R}^{4,2}$, we have that $(d_{X}\sigma_{2}, d_{X}\sigma_{2})$ is nowhere zero. Thus, $(d_{X}\sigma, d_{X}\tilde{\sigma})=0$ if and only if $\beta=0$ or $\delta =0$, i.e., $\sigma\in \Gamma s_{1}$ or $\tilde{\sigma}_{1}\in \Gamma s_{1}$. 
\end{proof}

\subsection{Dupin cyclides}
After spheres, Dupin cyclides are the next simplest object in Lie sphere geometry. One constructs them as follows: let $D$ be a 3-dimensional subspace of $\mathbb{R}^{4,2}$ which inherits an inner product of signature $(2,1)$ from $\mathbb{R}^{4,2}$. Then we have a splitting of $\mathbb{R}^{4,2}$ as 
\[ \mathbb{R}^{4,2} = D\oplus D^{\perp}.\]
One may then regularly parametrise the projective lightcones of $D$ and $D^{\perp}$ by maps $L: S^{1}\to \mathbb{P}(D)$ and $L^{\perp}:S^{1}\to \mathbb{P}(D^{\perp})$. Then one obtains a Legendre map 
\[ \mathcal{X}:S^{1}\times S^{1}\to \mathcal{Z}, \quad \mathcal{X}(u,v)= L(u)\oplus L^{\perp}(v).\]
The projection of $\mathcal{X}$ to any space form yields a parametrisation of a Dupin cyclide. Moreover, $L$ and $L^{\perp}$ are the curvature sphere congruences of $\mathcal{X}$. 

Dupin cyclides were originally defined by Dupin~\cite{D1822} as the envelope of a 1-parameter family of spheres tangent to three given spheres. In this way a Dupin cyclide is determined by these three spheres. This can be seen by letting $a,b,c\in\mathbb{P}(\mathcal{L})$ such that their span has signature $(2,1)$. Then letting $D=a\oplus b \oplus c$, one can construct a Dupin cyclide as above. Furthermore, $a$, $b$ and $c$ belong to one family of curvature spheres of the resulting Dupin cyclide and every curvature sphere in the other family is simultaneously tangent to $a$, $b$ and $c$. 

Now suppose that $f:\Sigma\to \mathcal{Z}$ is an umbilic-free Legendre map with curvature sphere congruences $s_{1}$ and $s_{2}$ and respective curvature subbundles $T_{1}$ and $T_{2}$. Let $\sigma_{1}\in\Gamma s_{1}$ and $\sigma_{2}\in\Gamma s_{2}$ be lifts of the curvature sphere congruences and let $X\in\Gamma T_{1}$ and $Y\in \Gamma T_{2}$. Then from Definition~\ref{def:curvsph} it follows immediately that 
\[ d_{X}\sigma_{1},d_{Y}\sigma_{2}\in\Gamma f.\]
Let 
\[ S_{1}:= \left\langle \sigma_{1},d_{Y}\sigma_{1},d_{Y}d_{Y}\sigma_{1}\right\rangle \quad \text{and}\quad 
S_{2}:= \left\langle \sigma_{2},d_{X}\sigma_{2},d_{X}d_{X}\sigma_{2}\right\rangle.\]
It was shown in~\cite{B1929} that $S_{1}$ and $S_{2}$ are orthogonal rank 3 subbundles of $\underline{\mathbb{R}}^{4,2}$ and the restriction of the metric on $\mathbb{R}^{4,2}$ to each $S_{i}$ has signature $(2,1)$. Furthermore, $S_{1}$ and $S_{2}$ do not depend on choices and we have the following orthogonal splitting
\[ \underline{\mathbb{R}}^{4,2} = S_{1}\oplus_{\perp} S_{2}\] 
of the trivial bundle. We refer to this splitting as the \textit{Lie cyclide splitting of $\underline{\mathbb{R}}^{4,2}$} because it can be identified with the Lie cyclides of $f$, i.e., a special congruence of Dupin cyclides making second order contact with $f$ (see~\cite[\S 86]{B1929}). 

This splitting now yields a splitting of the trivial connection $d$ on $\underline{\mathbb{R}}^{4,2}$:
\[ d = \mathcal{D} + \mathcal{N},\]
where $\mathcal{D}$ is the direct sum of the induced connections on $S_{1}$ and $S_{2}$ and 
\begin{equation}
\label{eqn:lcycN}
\mathcal{N} = d - \mathcal{D}\in \Omega^{1}((Hom(S_{1},S_{2})\oplus Hom(S_{2},S_{1}))\cap \mathfrak{o}(4,2)).
\end{equation}
Since $S_{1}$ and $S_{2}$ are orthogonal, we have that $\mathcal{D}$ is a metric connection on $\underline{\mathbb{R}}^{4,2}$ and $\mathcal{N}$ is a skew-symmetric endomorphism. Hence, $\mathcal{N}\in\Omega^{1}(S_{1}\wedge S_{2})$.

\subsection{Ribaucour transforms}
\label{subsec:ribaucour}

Suppose that $f,\hat{f}:\Sigma\to \mathcal{Z}$ are pointwise distinct Legendre immersions enveloping a common sphere congruence $s_{0}:=f\cap\hat{f}$. Assume that $f$ and $\hat{f}$ are umbilic-free with respective curvature sphere congruences $s_{1},s_{2}$ and $\hat{s}_{1},\hat{s}_{2}$, and let $T_{1},T_{2}\le T\Sigma$ and $\hat{T}_{1}, \hat{T}_{2}\le T\Sigma$ denote their respective rank 1 curvature subbundles. Classically two surfaces are Ribaucour transforms of each other if they are the envelopes of a sphere congruence such that the curvature directions of the surfaces are preserved. Interpreting this in the context of umbilic-free Legendre maps we have the following definition:

\begin{definition}
\label{def:rib}
Two umbilic-free Legendre maps $f,\hat{f}:\Sigma\to \mathcal{Z}$ are Ribaucour transforms of each other if $f$ and $\hat{f}$ envelope a common sphere congruence $s_{0}$ and $\hat{T}_{1} = T_{1}$ and $\hat{T}_{2} = T_{2}$. We then say that $f$ and $\hat{f}$ are a Ribaucour pair. 
\end{definition}

 In~\cite{BH2006}, the condition that two Legendre maps be Ribaucour transforms of each other was equated to the flatness of a certain normal bundle. In~\cite[Corollary 2.11, Remark 2.12]{P2017} this definition was shown to be equivalent to the following:
 
 \begin{lemma}
 \label{lem:ribpar}
 $f$ and $\hat{f}$ are Ribaucour transforms of each other if and only if for any sphere congruences $s\le f$ and $\hat{s}\le \hat{f}$ such that $s_{0}\cap s = s_{0}\cap \hat{s}=\{0\}$ one may choose lifts $\sigma\in \Gamma s$ and $\hat{\sigma}\in\Gamma \hat{s}$ such that $d\sigma,d\hat{\sigma}\in \Omega^{1}((s\oplus \hat{s})^{\perp})$. 
 \end{lemma}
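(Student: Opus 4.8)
The plan is to prove both directions of the equivalence by working with the curvature subbundle structure of the Ribaucour pair, translating between the geometric condition on curvature directions (Definition~\ref{def:rib}) and the analytic condition on lifts.

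First I would address the forward direction, assuming $f$ and $\hat{f}$ are Ribaucour transforms, so that they share a common enveloped sphere congruence $s_{0}=f\cap\hat{f}$ and satisfy $\hat{T}_{1}=T_{1}$, $\hat{T}_{2}=T_{2}$. Fix nowhere-zero $X\in\Gamma T_{1}$ and $Y\in\Gamma T_{2}$. Given sphere congruences $s\le f$ and $\hat{s}\le\hat{f}$ with $s_{0}\cap s=s_{0}\cap\hat{s}=\{0\}$, I would start from arbitrary lifts and seek to adjust them by scaling so that $d\sigma,d\hat\sigma\in\Omega^{1}((s\oplus\hat s)^{\perp})$. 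The key computational input is Lemma~\ref{lem:innprods}: since the curvature directions coincide, I can test the inner products $(d_{X}\sigma,d_{X}\hat\sigma)$, $(d_{Y}\sigma,d_{Y}\hat\sigma)$ and the mixed pairings against $\sigma$ and $\hat\sigma$ themselves, exploiting the contact condition $f^{(1)}\le f^{\perp}$ (hence $d\sigma\perp f$ and $d\hat\sigma\perp\hat f$) to kill several terms. The heart of the matter is to see that along each curvature direction the derivative of the common sphere $s_{0}$ behaves compatibly for both envelopes, which forces $d\sigma$ and $d\hat\sigma$ into $(s\oplus\hat s)^{\perp}$ after a suitable integrating-factor rescaling. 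I expect that establishing the existence of the correct scalings — i.e.\ solving the relevant first-order conditions so that the lifts land orthogonal to \emph{both} $s$ and $\hat s$ simultaneously, rather than just one — will be the main obstacle, since this is where the coincidence of curvature directions must be used in an essential way.

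For the converse, I would assume the existence of lifts $\sigma\in\Gamma s$, $\hat\sigma\in\Gamma\hat s$ with $d\sigma,d\hat\sigma\in\Omega^{1}((s\oplus\hat s)^{\perp})$ for the given transversal congruences. First I would verify that $f$ and $\hat f$ envelope a common sphere congruence: since $s\le f$ and $\hat s\le\hat f$, the orthogonality $d\sigma,d\hat\sigma\perp(s\oplus\hat s)$ together with the contact conditions pins down $s_{0}=f\cap\hat f$ as a genuine rank-$1$ common subbundle. Then, to recover $\hat T_{1}=T_{1}$ and $\hat T_{2}=T_{2}$, I would again invoke Lemma~\ref{lem:innprods} in the contrapositive: a tangent direction $Z$ lies in $T_{i}$ precisely when a suitable inner product $(d_{Z}\sigma,d_{Z}\hat\sigma)$ (or its analogue built from the curvature-sphere lifts) degenerates, and the orthogonality hypothesis transfers this degeneracy between $f$ and $\hat f$. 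This matches the characterisation in~\cite[Corollary 2.11, Remark 2.12]{P2017}, so the main task here is bookkeeping: showing that the analytic condition is independent of the choice of transversal $s,\hat s$ and that it reproduces exactly the pairing of curvature directions in Definition~\ref{def:rib}.

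Throughout, I would rely on the splitting $f=s_{1}\oplus s_{2}$, $T\Sigma=T_{1}\oplus T_{2}$ and the positive-definiteness of the metrics on $f_{i}/f$, which make the inner-product computations in Lemma~\ref{lem:innprods} decisive. The conceptual content is that the flatness of the normal bundle in~\cite{BH2006} and the orthogonality of the lifts are two faces of the same fact; the technical content is the careful choice of lifts and the vanishing of cross-terms coming from the contact condition.
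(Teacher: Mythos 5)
A preliminary remark: the paper offers no proof of Lemma~\ref{lem:ribpar} at all --- it is imported from \cite[Corollary 2.11, Remark 2.12]{P2017} --- so your proposal has to stand on its own rather than be matched against an in-text argument. Judged that way, it is a reasonable roadmap with a genuine gap at exactly the point where the lemma lives. You correctly reduce the existence of the lifts to an integrating-factor problem: normalising $(\sigma_{\ast},\hat{\sigma}_{\ast})=-1$ (legitimate, since $s_{0}\oplus s\oplus\hat{s}$ would otherwise be a $3$-dimensional isotropic subspace of $\mathbb{R}^{4,2}$), the rescaling $\sigma=\lambda\sigma_{\ast}$ satisfies $(d\sigma,\hat{\sigma})=0$ if and only if $d\log\lambda=\omega:=(d\sigma_{\ast},\hat{\sigma}_{\ast})$, and the equation for $\hat{\sigma}$ carries the same integrability condition because $\omega+(d\hat{\sigma}_{\ast},\sigma_{\ast})$ is exact. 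So the whole forward implication hinges on showing that $\hat{T}_{i}=T_{i}$ forces $d\omega=0$, where $d\omega(X,Y)=(d_{Y}\sigma_{\ast},d_{X}\hat{\sigma}_{\ast})-(d_{X}\sigma_{\ast},d_{Y}\hat{\sigma}_{\ast})$. You then explicitly defer precisely this step, calling the existence of the correct scalings ``the main obstacle'' where the coincidence of curvature directions ``must be used in an essential way''. That curvature computation is the entire analytic content of the direction; naming it as an obstacle and gesturing at Lemma~\ref{lem:innprods} plus ``compatibility'' is not an argument, so the forward direction remains unproven in your plan.

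The converse contains a concrete flaw rather than just an omission. Lemma~\ref{lem:innprods} computes $(d_{X}\sigma,d_{X}\tilde{\sigma})$ for two sections of the \emph{same} Legendre map; your proposed criterion --- that $Z\in T_{i}$ exactly when $(d_{Z}\sigma,d_{Z}\hat{\sigma})$ degenerates, with $\sigma\in\Gamma f$ but $\hat{\sigma}\in\Gamma\hat{f}$ --- is a mixed pairing that the lemma does not cover, and as stated it is not a characterisation of curvature directions. The mechanism that makes cross-surface arguments of this kind work in the paper (see the converse half of Proposition~\ref{prop:ribcyc}) is the common section: $\sigma_{0}\in\Gamma s_{0}$ lies in both $\Gamma f$ and $\Gamma\hat{f}$, one writes $X=\hat{X}+\mu\hat{Y}$, uses the hypothesis to kill all terms except $\mu(d_{\hat{Y}}\sigma_{0},d_{\hat{Y}}\hat{\sigma}_{1})$, and then applies Lemma~\ref{lem:innprods} \emph{inside} $\hat{f}$ to force $\mu=0$; your plan never brings $s_{0}$ into play in this way, and some such device is indispensable. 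Two smaller points: your opening step of ``verifying'' that $f$ and $\hat{f}$ envelope a common sphere congruence is vacuous, since $s_{0}=f\cap\hat{f}$ is a standing hypothesis of Section 2.3 and of Definition~\ref{def:rib}, not a consequence of the orthogonality of the lifts; and since the statement quantifies over \emph{all} admissible pairs $(s,\hat{s})$, the forward direction also owes a (routine, but absent) check that the closedness of $\omega$ does not depend on the choice of transversal congruences.
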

 
 We now show that a Ribaucour pair is equipped with a special pair of Dupin cyclide congruences. 
 
\begin{proposition}
\label{prop:ribcyc}
Suppose that $s_{0}$ nowhere coincides with the curvature sphere congruences $s_{1},s_{2}$ and $\hat{s}_{1},\hat{s}_{2}$ of $f$ and $\hat{f}$, respectively. Then $f$ and $\hat{f}$ are Ribaucour transforms of each other if and only if
\begin{equation}
\label{eqn:ribcyc}
d\hat{\sigma}_{1}(\hat{T}_{2})\le s_{1}\oplus \hat{s}_{1}\oplus d\sigma_{1}(T_{2}) \quad \text{and}\quad
 d\hat{\sigma}_{2}(\hat{T}_{1})\le s_{2}\oplus \hat{s}_{2}\oplus d\sigma_{2}(T_{1}), 
\end{equation}
where $\sigma_{i}\in \Gamma s_{i}$ and $\hat{\sigma}_{i}\in \Gamma \hat{s}_{i}$. 
\end{proposition}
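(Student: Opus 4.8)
The plan is to prove the two inclusions in~\eqref{eqn:ribcyc} separately; since they are interchanged by the symmetry $1\leftrightarrow 2$ (swapping the roles of the two curvature sphere congruences and of the two curvature subbundles), it suffices to treat the first,
\[ d\hat\sigma_1(\hat T_2)\le s_1\oplus\hat s_1\oplus d\sigma_1(T_2). \]
I would first record that both sides are independent of the chosen lifts: rescaling $\sigma_1$ alters $d\sigma_1(T_2)$ only by a section of $s_1$, and rescaling $\hat\sigma_1$ alters the line $d\hat\sigma_1(\hat T_2)$ only by a section of $\hat s_1$, both already present. The object underlying the right-hand side is the rank $3$ bundle $W_1:=\langle\sigma_1,\hat\sigma_1,d_Y\sigma_1\rangle$ with $Y\in\Gamma T_2$ nowhere zero; I would check at the outset that, away from the degenerate locus, $W_1$ has signature $(2,1)$, so that it defines a Dupin cyclide congruence with $s_1$ and $\hat s_1$ among its curvature spheres, and record the companion fact that the common sphere $s_0$ is exactly the radical of $f+\hat f=\langle\sigma_1,\sigma_2,\hat\sigma_1\rangle$, since $\sigma_0\in\Gamma(f\cap\hat f)$ forces $\sigma_0\perp f+\hat f$.

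For the forward implication I would assume $f,\hat f$ to be a Ribaucour pair and invoke Lemma~\ref{lem:ribpar} for the admissible pair $(s_1,\hat s_1)$ to obtain lifts with $d\sigma_1,d\hat\sigma_1\in\Omega^1((s_1\oplus\hat s_1)^\perp)$. Combining this with $d\sigma_1\in\Omega^1(f^\perp)$ and $d\hat\sigma_1\in\Omega^1(\hat f^\perp)$ upgrades the orthogonality to $d\sigma_1,d\hat\sigma_1\in\Omega^1((f+\hat f)^\perp)$. The key computation then uses the common envelope: writing $\sigma_0=p\sigma_1+q\sigma_2$ and $\sigma_0=\hat p\hat\sigma_1+\hat q\hat\sigma_2$, and using that $Y\in\Gamma T_2=\Gamma\hat T_2$ is the shared curvature direction so that $d_Y\sigma_2\in\Gamma f$ and $d_Y\hat\sigma_2\in\Gamma\hat f$, I would show that $d_Y\sigma_0-p\,d_Y\sigma_1$ and $d_Y\sigma_0-\hat p\,d_Y\hat\sigma_1$ each lie in $(f+\hat f)^\perp\cap(f+\hat f)=\langle\sigma_0\rangle$. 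This yields
\[ d_Y\sigma_0=p\,d_Y\sigma_1+\nu\,\sigma_0=\hat p\,d_Y\hat\sigma_1+\hat\nu\,\sigma_0 \]
for functions $\nu,\hat\nu$, whence $\hat p\,d_Y\hat\sigma_1-p\,d_Y\sigma_1=(\nu-\hat\nu)\sigma_0$. Since $\sigma_0$ has a nonzero $s_2$-component, the desired inclusion $d_Y\hat\sigma_1\in W_1$ is then equivalent to the single scalar identity $\nu=\hat\nu$.

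Establishing $\nu=\hat\nu$ is the heart of the matter and, I expect, the main obstacle. The difficulty is structural: every vector arising naturally here ($\sigma_i,\hat\sigma_i,d_Y\sigma_1,d_Y\hat\sigma_1$) is orthogonal to $\sigma_0$, so the $s_0$-component $(\nu-\hat\nu)\sigma_0$ cannot be detected by pairing against the data at hand, and a purely first-order argument does not suffice. I would resolve it by differentiating the two expressions for $d_Y\sigma_0$ and invoking the flatness of the trivial connection $d$, now bringing in the \emph{second} curvature correspondence $T_1=\hat T_1$ (not yet used): the mixed integrability relation for $\sigma_0$ along $T_1$ and $T_2$ couples $\nu-\hat\nu$ to the curvature data of both families and forces it to vanish. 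Equivalently, $\nu=\hat\nu$ says that the normalised derivatives $p\,d_Y\sigma_1$ and $\hat p\,d_Y\hat\sigma_1$ coincide, i.e.\ the two Lie cyclide directions agree along the shared curvature line.

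For the converse I would assume~\eqref{eqn:ribcyc} and run the analysis backwards. The first inclusion is precisely the statement that the bundle $\langle\sigma_1,\hat\sigma_1,d\sigma_1(T_2),d\hat\sigma_1(\hat T_2)\rangle$ has rank $3$ rather than $4$; together with the second inclusion this realises $W_1$ and $W_2$ as Dupin cyclide congruences making second order contact with both $f$ and $\hat f$. Since a Dupin cyclide meets a Legendre immersion along a curvature direction, the contact of $W_1$ (respectively $W_2$) with $\hat f$ occurs along $\hat T_2$ (respectively $\hat T_1$), and comparing with the contact along $T_2$ (respectively $T_1$) built into the right-hand sides forces $\hat T_2=T_2$ and $\hat T_1=T_1$. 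As $f$ and $\hat f$ already share the sphere congruence $s_0$, this is exactly the assertion that they form a Ribaucour pair. Throughout, the points requiring care are the genuine nondegeneracy of $W_1,W_2$ (so that the Dupin cyclide interpretation is available) and the consistent bookkeeping of lifts between the Lemma~\ref{lem:ribpar} normalisation and the decomposition of $\sigma_0$.
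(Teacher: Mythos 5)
Your forward direction is set up correctly but stops short of a proof at exactly the point you identify as the heart of the matter. The reduction itself checks out: with the lifts of Lemma~\ref{lem:ribpar} one indeed has $d\sigma_{1},d\hat{\sigma}_{1}\in\Omega^{1}((f+\hat{f})^{\perp})$, the radical of $f+\hat{f}$ is $\langle\sigma_{0}\rangle$, and writing $\sigma_{0}=p\sigma_{1}+q\sigma_{2}=\hat{p}\hat{\sigma}_{1}+\hat{q}\hat{\sigma}_{2}$ gives $\hat{p}\,d_{Y}\hat{\sigma}_{1}-p\,d_{Y}\sigma_{1}=(\nu-\hat{\nu})\sigma_{0}$ with $\sigma_{0}\notin W_{1}$, so everything hinges on $\nu=\hat{\nu}$. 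But that identity is then only gestured at: ``the mixed integrability relation for $\sigma_{0}$ along $T_{1}$ and $T_{2}$ \ldots forces it to vanish'' is not an argument, and flatness of $d$ alone produces no such relation. The step can be completed, but only by the second-order computation that constitutes the paper's actual proof: pair $\hat{p}\,d_{Y}\hat{\sigma}_{1}-p\,d_{Y}\sigma_{1}$ against $d_{X}d_{X}\sigma_{0}$ (for $X\in\Gamma T_{1}=\Gamma\hat{T}_{1}$) and check, using $d_{X}\sigma_{1}\in\Gamma f$, $d_{X}\hat{\sigma}_{1}\in\Gamma\hat{f}$ and $f^{(1)}=f^{\perp}$, that $(d_{Y}\sigma_{1},d_{X}d_{X}\sigma_{0})=(d_{Y}\hat{\sigma}_{1},d_{X}d_{X}\sigma_{0})=0$, while $(\sigma_{0},d_{X}d_{X}\sigma_{0})=-(d_{X}\sigma_{0},d_{X}\sigma_{0})\neq 0$ by Lemma~\ref{lem:innprods} since $s_{0}\neq s_{1}$. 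The paper avoids your $\nu,\hat{\nu}$ bookkeeping entirely by showing directly that both $s_{1}\oplus\hat{s}_{1}\oplus d\sigma_{1}(T_{2})$ and $s_{1}\oplus\hat{s}_{1}\oplus d\hat{\sigma}_{1}(T_{2})$ equal $\langle\sigma_{0},d_{X}\sigma_{0},d_{X}d_{X}\sigma_{0}\rangle^{\perp}$; your missing scalar identity is precisely that orthogonality statement in disguise.

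The converse is the more serious gap. Your appeal to ``a Dupin cyclide meets a Legendre immersion along a curvature direction'' is not a result available in (or proved by) the paper, and as deployed it is circular: that $W_{1}$ contains $\hat{s}_{1}$ and $d\hat{\sigma}_{1}(\hat{T}_{2})$ is the hypothesis \eqref{eqn:ribcyc}, so asserting that the contact with $\hat{f}$ ``occurs along $\hat{T}_{2}$'' restates it, and nothing in your sketch explains why contact of one cyclide congruence with two \emph{different} surfaces along two a priori different directions should force those directions to agree. Tellingly, your converse never uses the common sphere congruence $s_{0}$ or any definiteness, which is where the forcing actually comes from: in the paper one first checks $d_{X}\sigma_{0}\perp s_{1}\oplus\hat{s}_{1}\oplus d\sigma_{1}(T_{2})$ for $X\in\Gamma T_{1}$, so by hypothesis $(d_{X}\sigma_{0},d_{\hat{Y}}\hat{\sigma}_{1})=0$ for $\hat{Y}\in\Gamma\hat{T}_{2}$; writing $X=\hat{X}+\mu\hat{Y}$ and using $d_{\hat{X}}\hat{\sigma}_{1}\in\Gamma\hat{f}$ this collapses to $\mu\,(d_{\hat{Y}}\sigma_{0},d_{\hat{Y}}\hat{\sigma}_{1})=0$, and Lemma~\ref{lem:innprods} together with the standing assumption that $s_{0}$ avoids $\hat{s}_{2}$ makes the pairing nonzero, whence $\mu=0$ and $\hat{T}_{1}=T_{1}$ (similarly $\hat{T}_{2}=T_{2}$). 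Without an argument of this kind your converse does not exclude $\hat{T}_{i}\neq T_{i}$, so as written the proposal proves neither implication in full.
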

\begin{proof}
Suppose that $f$ and $\hat{f}$ are Ribaucour transforms of each other and thus $\hat{T}_{1}=T_{1}$ and $\hat{T}_{2}=T_{2}$. Then $d_{X}\sigma_{1}\in \Gamma f$ and $d_{X}\hat{\sigma}_{1}\in \Gamma\hat{f}$, for $X\in \Gamma T_{1}$, $\sigma_{1}\in \Gamma s_{1}$ and $\hat{\sigma}_{1}\in \Gamma \hat{s}_{1}$. One then deduces that 
\[ s_{1}\oplus \hat{s}_{1}\oplus d\sigma_{1}(T_{2}) = s_{1}\oplus \hat{s}_{1}\oplus d\hat{\sigma}_{1}(T_{2}) = \langle \sigma_{0}, d_{X}\sigma_{0}, d_{X}d_{X}\sigma_{0}\rangle^{\perp}, \]
for $\sigma_{0}\in \Gamma s_{0}$. Similarly, one has that 
\[ s_{2}\oplus \hat{s}_{2}\oplus d\sigma_{2}(T_{1}) = s_{2}\oplus \hat{s}_{2}\oplus d\hat{\sigma}_{2}(T_{1}) = \langle \sigma_{0}, d_{Y}\sigma_{0}, d_{Y}d_{Y}\sigma_{0}\rangle^{\perp}, \]
for $Y\in \Gamma T_{2}$. 

Conversely, suppose that~(\ref{eqn:ribcyc}) holds. Now for $\sigma_{0}\in\Gamma s_{0}$ and $X\in \Gamma T_{1}$, one has that $d_{X}\sigma_{0}\perp s_{1}\oplus \hat{s}_{1}\oplus d\sigma_{1}(T_{2})$. Thus, $0= (d_{X}\sigma_{0}, d_{\hat{Y}}\hat{\sigma}_{1})$, for any $\hat{\sigma}_{1}\in \Gamma \hat{s}_{1}$ and $\hat{Y}\in \Gamma \hat{T}_{2}$. Writing $X= \hat{X}+\mu \hat{Y}$, for some $\hat{X}\in \Gamma \hat{T}_{1}$ and smooth function $\mu$, one has that  
\[ 0 = (d_{\hat{X}}\sigma_{0}, d_{\hat{Y}}\hat{\sigma}_{1}) + \mu (d_{\hat{Y}}\sigma_{0}, d_{\hat{Y}}\hat{\sigma}_{1}) = \mu (d_{\hat{Y}}\sigma_{0}, d_{\hat{Y}}\hat{\sigma}_{1}),\]
since $d_{\hat{X}}\hat{\sigma}_{1}\in \Gamma \hat{f}$. By Lemma~\ref{lem:innprods}, $(d_{\hat{Y}}\sigma_{0}, d_{\hat{Y}}\hat{\sigma}_{1})\neq 0$, and thus $\mu=0$. Hence, $\hat{T}_{1} = T_{1}$. A similar argument shows that $\hat{T}_{2}=T_{2}$. Hence, $f$ and $\hat{f}$ are Ribaucour transforms of each other. 
\end{proof}

We now seek a geometric interpretation of the conditions in~(\ref{eqn:ribcyc}). Suppose that $(u,v)$ are local curvature line coordinates of $f$ about a point $p=(u_{0},v_{0})$ and consider the Dupin cyclide for which $s_{1}(u_{0},v_{0})$, $s_{1}(u_{0},v_{0}+\epsilon)$ and $\hat{s}_{1}(u_{0},v_{0})$ are contained in one family of curvature spheres, for sufficiently small $\epsilon\neq 0$. One obtains a Dupin cyclide $D_{1}(p)$ by taking the limit as $\epsilon$ tends to zero. In this way one obtains a smooth congruence $D_{1}$ of Dupin cyclides over $\Sigma$ and in fact this is represented as 
\[ D_{1} = s_{1}\oplus \hat{s}_{1}\oplus d\sigma_{1}(T_{2}).\]
On the other hand, suppose that $(\hat{u},\hat{v})$ are curvature line coordinates for $\hat{f}$ around $p=(\hat{u}_{0},\hat{v}_{0})$. One can consider the Dupin cyclide $\hat{D}_{1}(\hat{u}_{0},\hat{v}_{0})$ formed by taking the limit $s_{1}(\hat{u}_{0},\hat{v}_{0})$,  $\hat{s}_{1}(\hat{u}_{0},\hat{v}_{0})$ and $\hat{s}_{1}(\hat{u}_{0},\hat{v}_{0}+\epsilon)$ as $\epsilon$ tends to zero. We then obtain a second smooth congruence $\hat{D}_{1}$ of Dupin cyclides over $\Sigma$:
\[ \hat{D}_{1} = s_{1}\oplus \hat{s}_{1}\oplus d\hat{\sigma}_{1}(\hat{T}_{2}).\]
One then deduces that the first condition of~(\ref{eqn:ribcyc}) is equivalent to asking that the two Dupin cyclide congruences $D_{1}$ and $\hat{D}_{1}$ coincide. 
The second condition of~(\ref{eqn:ribcyc}) can be interpreted in terms of $s_{2}$ and $\hat{s}_{2}$ in an analogous way. 

Therefore, if $f$ and $\hat{f}$ are a Ribaucour pair of umbilic-free Legendre immersions, one obtains two special Dupin cyclide congruences enveloping $f$ and $\hat{f}$:

\begin{definition}
\label{def:ribcyc}
The Dupin cyclide congruences 
\[ D_{1} := s_{1}\oplus \hat{s}_{1}\oplus d\sigma_{1}(T_{2}) = s_{1}\oplus \hat{s}_{1}\oplus d\hat{\sigma}_{1}(T_{2})\]
and 
\[ D_{2} := s_{2}\oplus \hat{s}_{2}\oplus d\sigma_{2}(T_{1}) = s_{2}\oplus \hat{s}_{2}\oplus d\hat{\sigma}_{2}(T_{1})\]
will be called the Ribaucour cyclide congruences of the Ribaucour pair $f$ and $\hat{f}$. 
\end{definition}

As shown in the proof of Proposition~\ref{prop:ribcyc}, one has that 
\[ D_{1}^{\perp} = \langle \sigma_{0}, d_{X}\sigma_{0}, d_{X}d_{X}\sigma_{0}\rangle \quad \text{and}\quad 
D_{2}^{\perp} = \langle \sigma_{0}, d_{Y}\sigma_{0}, d_{Y}d_{Y}\sigma_{0}\rangle,\]
where $\sigma_{0}\in \Gamma s_{0}$, $X\in \Gamma T_{1}$ and $Y\in \Gamma T_{2}$. 

\section{Channel surfaces in Lie sphere geometry}
\label{sec:channel}
Channel surfaces have been a rich area of interest for many years. Examples of such surfaces include surfaces of revolution, tubular surfaces and Dupin cyclides. They are given simply by the following well-known definition:

\begin{definition}
A channel surface is the envelope of a 1-parameter family of spheres. 
\end{definition}

There are several characterisations of theses surfaces, for example, in Euclidean geometry they are the surfaces for which one family of curvature lines are circular or, equivalently, one of the principal curvatures is constant along the corresponding family of curvature lines, i.e., in terms of local curvature line coordinates $(u,v)$, $\kappa_{1,u}=0$ or $\kappa_{2,v}=0$. We shall study these surfaces in the context of Lie sphere geometry. 

A sphere curve can be realised as a map $s:I\to \mathbb{P}(\mathcal{L})$, where $I$ is a 1-dimensional manifold. We impose a regularity condition that ensures the existence of an envelope of $s$: the induced metric on $s^{(1)}/s$ is positive definite. We now seek a parametrisation of the envelope of this sphere curve. In order to do this we shall construct a Legendre map enveloping $s$. Firstly, let $V$ be a rank 3 subbundle of $I\times \mathbb{R}^{4,2}$ such that the induced metric on $V$ has signature $(2,1)$ and such that $s^{(1)}\le V$. Then $V^{\perp}$ is a rank 3 subbundle of $I\times \mathbb{R}^{4,2}$ and at each point $t\in I$ we may parametrise the projective light cone of $V^{\perp}$ by a map $\tilde{s}_{t}:S^{1}\to \mathbb{P}(\mathcal{L})$. Without loss of generality, we make the assumption that this is a regular parametrisation, i.e., the induced metric on $\tilde{s}_{t}^{(1)}/\tilde{s}_{t}$ is positive definite. We may extend this smoothly to all of $I$ to obtain a map 
\[ \tilde{s}:I\times S^{1}\to \mathbb{P}(\mathcal{L}).\]
We also extend the maps $s$ and $V$ trivially to maps on $I\times S^{1}$. 
\begin{lemma}
\label{lem:legchan}
$f:=s\oplus \tilde{s}$ is a Legendre map.
\end{lemma}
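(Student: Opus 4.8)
The plan is to verify the two defining conditions of a Legendre map for $f = s \oplus \tilde{s}$: the contact condition $f^{(1)} \le f^\perp$ and the immersion condition $\ker \beta = \{0\}$. First I must check that $f$ actually takes values in $\mathcal{Z}$, i.e.\ that $f(t,w) = s(t) \oplus \tilde{s}_t(w)$ is a $2$-dimensional isotropic subspace. Both $s$ and $\tilde{s}$ are rank $1$ null subbundles by construction, so $f$ is null provided $s \perp \tilde{s}$; this holds because $\tilde{s} \le V^\perp$ while $s \le s^{(1)} \le V$, so $s$ and $\tilde{s}$ lie in orthogonal subbundles. The same observation shows $s$ and $\tilde{s}$ are pointwise distinct, giving rank exactly $2$.

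For the contact condition I would compute $f^{(1)}$ by differentiating lifts $\sigma \in \Gamma s$ and $\tilde{\sigma} \in \Gamma \tilde{s}$ in both the $I$-direction (call it $\partial_t$) and the $S^1$-direction (call it $\partial_w$). Since $\tilde{s}$ is built fibrewise in $V^\perp$ and $V^\perp$ is preserved as a bundle, the $w$-derivative $d_{\partial_w}\tilde{\sigma}$ lands in $V^\perp$, hence is orthogonal to $s \le V$; and $d_{\partial_w}\sigma = 0$ since $s$ does not depend on $w$. The $t$-derivatives require the key input $s^{(1)} \le V$: this forces $d_{\partial_t}\sigma \in \Gamma V$, so $d_{\partial_t}\sigma \perp \tilde{s}$, while $d_{\partial_t}\tilde{\sigma}$ must be checked against both $s$ and $\tilde{s}$. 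Assembling these, every first derivative of a section of $f$ pairs to zero against both $\sigma$ and $\tilde{\sigma}$, which is exactly $f^{(1)} \le f^\perp$.

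For the immersion condition I would exhibit $\beta$ explicitly on the two coordinate directions. The regularity hypothesis on $s$, namely that $s^{(1)}/s$ carries a positive definite metric, ensures $d_{\partial_t}\sigma \not\equiv 0 \bmod s$, so $\beta(\partial_t)s \ne 0$; and the regularity hypothesis on $\tilde{s}$, that $\tilde{s}_t^{(1)}/\tilde{s}_t$ is positive definite, ensures $\beta(\partial_w)\tilde{\sigma} \ne 0$. Because $\partial_t$ acts nontrivially on the $s$-summand while $\partial_w$ acts nontrivially on the $\tilde{s}$-summand, a tangent vector in $\ker \beta$ would have to kill both derivatives simultaneously, forcing it to be zero. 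I expect the main obstacle to be organising the orthogonality bookkeeping cleanly---in particular making precise that $d_{\partial_t}\tilde{\sigma}$ is orthogonal to $\tilde{s}$ (which follows from $\tilde{s}$ being null, since $(\tilde\sigma,\tilde\sigma)=0$ differentiates to $(d_{\partial_t}\tilde\sigma,\tilde\sigma)=0$) and, crucially, that it is orthogonal to $s$. The latter is the subtle point: it does not follow from $V^\perp$ alone, but should follow from the contact-type relationship between the two families, and verifying it carefully is where the real content of the lemma lies.
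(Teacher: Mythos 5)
Your route is the paper's own: show $f$ maps into $\mathcal{Z}$ via $s\le V$, $\tilde{s}\le V^{\perp}$; contact from $s^{(1)}\le V\perp\tilde{s}$; immersion from the two regularity hypotheses. However, you end by leaving open the very step you call ``the real content of the lemma'' --- that $d_{\partial_t}\tilde{\sigma}\perp s$ --- claiming it ``does not follow from $V^{\perp}$ alone'' but from some unverified ``contact-type relationship.'' As written this is a genuine gap, and the diagnosis is also wrong: the step closes in one line using data you already invoked. Since $s\le V$ and $\tilde{s}\le V^{\perp}$, we have $(\sigma,\tilde{\sigma})\equiv 0$, and differentiating in $t$ gives $(d_{\partial_t}\tilde{\sigma},\sigma)=-(\tilde{\sigma},d_{\partial_t}\sigma)$; the right-hand side vanishes because $d_{\partial_t}\sigma\in\Gamma s^{(1)}\le\Gamma V\perp\tilde{s}$ --- precisely the hypothesis $s^{(1)}\le V$ you used two sentences earlier for the other pairing. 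More generally, since $f$ is isotropic, $(d_X\nu_1,\nu_2)=-(\nu_1,d_X\nu_2)$ for any sections $\nu_1,\nu_2\in\Gamma f$, so the contact condition only ever needs checking ``one way round,'' and the single non-automatic pairing is $(d\sigma,\tilde{\sigma})$. This is exactly what the paper's terse proof means by ``$s^{(1)}\le V\perp\tilde{s}$, hence $f$ satisfies the contact condition.''

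A second, smaller imprecision: in the immersion part you argue that regularity of $s$ gives $d_{\partial_t}\sigma\not\equiv 0 \bmod s$, but $\beta$ takes values in $Hom(f,f^{(1)}/f)$, so what you need is $d_{\partial_t}\sigma\notin f=s\oplus\tilde{s}$, and likewise $d_{\partial_w}\tilde{\sigma}\notin f$. Both are rescued by the orthogonal splitting $\mathbb{R}^{4,2}=V\oplus V^{\perp}$ (non-degenerate since $V$ has signature $(2,1)$, so $V\cap V^{\perp}=\{0\}$): as $d_{\partial_t}\sigma\in s^{(1)}\le V$, membership in $f$ would force its $\tilde{s}$-component into $V\cap V^{\perp}$, hence $d_{\partial_t}\sigma\in s$, contradicting the positive definiteness of the metric on $s^{(1)}/s$; symmetrically, $d_{\partial_w}\tilde{\sigma}\in V^{\perp}$ in $f$ would force $d_{\partial_w}\tilde{\sigma}\in\tilde{s}$, contradicting the regularity of $\tilde{s}_t$. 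With these two repairs your argument is complete and coincides with the paper's proof, which compresses all of the above into two sentences.
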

\begin{proof}
Since $s\le V$ and $\tilde{s}\le V^{\perp}$ we have that $f:=s\oplus \tilde{s}$ is a map from $I\times S^{1}$ into $\mathcal{Z}$. Furthermore, $s^{(1)}\le V\perp \tilde{s}$. Hence, $f$ satisfies the contact condition. The immersion condition follows from the regularity conditions of $s$ on $I$ and $\tilde{s}_{t}$ on $S^{1}$ for each $t\in I$.
\end{proof}

\begin{remark}
Suppose that $f=s\oplus \tilde{s}$ is a Legendre map arising from $V$. Suppose that $\overline{V}$ is another rank 3 subbundle of $I\times \mathbb{R}^{4,2}$ such that the induced metric on $\overline{V}$ has signature $(2,1)$ and such that $s^{(1)}\le \overline{V}$. Then 
\[ \bar{s}:=f\cap \overline{V}^{\perp}:I\times S^{1}\to \mathbb{P}(\mathcal{L})\]
is well-defined, and since $\overline{V}$ only depends on $I$, $\bar{s}(t,.)$ parametrises the projective lightcone of $\overline{V}_{t}^{\perp}$, for each $t\in I$. 
\end{remark}

Since $s$ only depends on $I$, one has that $s$ is a curvature sphere congruence of $f$ with curvature subbundle $T_{1}:= TS^{1}$. If one chooses $V = \langle \sigma, d_{Y}\sigma,d_{Y}d_{Y}\sigma\rangle$, where $\sigma\in \Gamma s$ and $Y\in \Gamma TI$, then the resulting $\tilde{s}$ will be the other curvature sphere congruence of $f$ with curvature subbundle $T_{2}:= TI$: this follows from the fact that for any lifts $\sigma\in \Gamma s$ and $\tilde{\sigma}\in \Gamma \tilde{s}$,
\[ 0= (d_{Y}d_{Y}\sigma, \tilde{\sigma})= - (d_{Y}\sigma, d_{Y}\tilde{\sigma}).\]
Then by Lemma~\ref{lem:innprods}, one has that either $\sigma\in \Gamma s_{2}$ or $\tilde{\sigma}\in \Gamma s_{2}$. However, our assumption that $s^{(1)}/s$ is positive definite means that $\tilde{\sigma}\in \Gamma s_{2}$. In this case, the splitting of the trivial bundle $\underline{\mathbb{R}}^{4,2}=V\oplus V^{\perp}$ is the Lie cyclide splitting of $f$. 

Conversely, suppose that $f:\Sigma\to \mathcal{Z}$ is an umbilic-free Legendre map such that one of the curvature sphere congruence $s_{i}$ is constant along the leaves of its curvature subbundle $T_{i}$, i.e., $d_{X}\sigma_{i}\in \Gamma s_{i}$ for $\sigma_{i}\in \Gamma s_{i}$ and $X\in \Gamma T_{i}$. Then $f$ envelopes a sphere congruence $s:=s_{i}$ that only depends on one parameter. Hence, $f$ parametrises a channel surface. 

\begin{proposition}
\label{prop:legchannel}
An umbilic-free Legendre map $f:\Sigma\to \mathcal{Z}$ parametrises a channel surface if and only if one of the curvature sphere congruences $s_{i}$ is constant along the leaves of its curvature subbundle $T_{i}$. 
\end{proposition}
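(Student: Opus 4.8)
The plan is to prove the two implications separately, in each case assembling the sphere-curve/Legendre-map correspondence developed above rather than passing to a space form. The conceptual hinge I would rely on is Definition~\ref{def:curvsph}: a sphere congruence $s\le f$ that is constant along some nowhere-zero direction $X$ (so that $d_X\sigma\in\Gamma s$ for a lift $\sigma\in\Gamma s$) satisfies $\beta(X)s=0$ and is therefore automatically a curvature sphere. Both directions reduce to recognising a one-parameter family of spheres on the one hand and a curvature sphere constant along a curvature direction on the other.

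For the converse I would suppose that $s_i$ is constant along the leaves of $T_i$, i.e. $d_X\sigma_i\in\Gamma s_i$ for $\sigma_i\in\Gamma s_i$ and $X\in\Gamma T_i$. Since $\Sigma$ is foliated by the integral curves of $T_i$ and $s_i$ is constant on each such leaf, $s_i$ factors through the one-dimensional leaf space and is thus, locally, a sphere curve $s:I\to\mathbb{P}(\mathcal{L})$. As $f$ envelopes each of its curvature sphere congruences, $f$ is then the envelope of this one-parameter family, which is precisely a channel surface.

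For the forward direction I would suppose that $f$ parametrises a channel surface, so that $f$ envelopes a sphere curve $s:I\to\mathbb{P}(\mathcal{L})$ with $s^{(1)}/s$ positive definite. Pulled back to $\Sigma$, the family $s$ becomes a sphere congruence $s\le f$ that is constant along the characteristic directions of the envelope; along any such nowhere-zero $X$ one has $d_X\sigma\in\Gamma s$, whence $s$ is a curvature sphere by the hinge above. Since $f$ is umbilic-free there are exactly two curvature sphere congruences, so $s$ coincides with one of them, say $s=s_i$, and the characteristic direction lies in the rank-$1$ curvature subbundle $T_i$; comparing ranks forces $T_i$ to be exactly the characteristic direction, so $s_i=s$ is constant along the leaves of $T_i$. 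The concrete realisation is the construction preceding Lemma~\ref{lem:legchan}: choosing $V=\langle\sigma,d_Y\sigma,d_Yd_Y\sigma\rangle$ with $Y\in\Gamma TI$ yields $f=s\oplus\tilde{s}$, and the identity $0=(d_Yd_Y\sigma,\tilde{\sigma})=-(d_Y\sigma,d_Y\tilde{\sigma})$ together with Lemma~\ref{lem:innprods} and the positive-definiteness of $s^{(1)}/s$ pins down $\tilde{s}$ (rather than $s$) as the second curvature sphere.

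The step I expect to be the main obstacle is precisely this identification in the forward direction: a priori the given one-parameter family is merely a sphere congruence $s\le f$, and one must verify both that it is a genuine curvature sphere congruence and which of the two curvature directions it governs. This is where the regularity hypothesis on $s^{(1)}/s$ and Lemma~\ref{lem:innprods} are indispensable, guaranteeing that the characteristic foliation is a genuine rank-$1$ distribution and that $s$ is the curvature congruence constant along it; the remainder of the argument is bookkeeping with the derived bundles.
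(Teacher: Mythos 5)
Your proposal is correct and takes essentially the same approach as the paper: your converse direction is the paper's observation that a curvature sphere congruence constant along the leaves of its curvature subbundle depends on one parameter only, and your forward direction reproduces the paper's construction preceding Lemma~\ref{lem:legchan}, including the observation that $d_{X}\sigma\in\Gamma s$ forces $\beta(X)s=0$ (so $s$ is a curvature sphere with curvature space $T_{i}$ by umbilic-freeness) and the identity $0=(d_{Y}d_{Y}\sigma,\tilde{\sigma})=-(d_{Y}\sigma,d_{Y}\tilde{\sigma})$ together with Lemma~\ref{lem:innprods} and positive definiteness of $s^{(1)}/s$ to identify the second curvature sphere. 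There is no gap; your treatment of the identification step is at the same level of rigour as, and in substance identical to, the paper's.
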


In view of Proposition~\ref{prop:legchannel} we have the following definition:

\begin{definition}
$T_{i}$ is called a circular curvature direction of $f$ if $s_{i}$ is constant along the leaves of $T_{i}$. 
\end{definition}

Since the Lie cyclides of a Legendre map are given by
\[ S_{1}=\langle \sigma_{1}, d_{Y}\sigma_{1}, d_{Y}d_{Y}\sigma_{1}\rangle \quad \text{and}\quad S_{2}=\langle \sigma_{2}, d_{X}\sigma_{2}, d_{X}d_{X}\sigma_{2}\rangle ,\]
where $\sigma_{1}\in \Gamma s_{1}$, $\sigma_{2}\in \Gamma s_{2}$, $X\in \Gamma T_{1}$ and $Y\in \Gamma T_{2}$, one deduces the following corollary:

\begin{corollary}
An umbilic-free Legendre map $f:\Sigma\to \mathcal{Z}$ parametrises a channel surface if and only if the Lie cyclides of $f$ are constant along the leaves of one of the curvature subbundles $T_{i}$, i.e., $\mathcal{N}(T_{i}) = 0$. 
\end{corollary}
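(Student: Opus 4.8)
The plan is to leverage the explicit description of the Lie cyclide splitting $\underline{\mathbb{R}}^{4,2} = S_1 \oplus_\perp S_2$ together with the characterisation of channel surfaces provided by Proposition~\ref{prop:legchannel}. Recall that $\mathcal{N} = d - \mathcal{D}$ measures the failure of the subbundles $S_1, S_2$ to be preserved by the trivial connection $d$: for a section $\psi \in \Gamma S_1$, we have $\mathcal{N}(\psi) = \pi_{S_2}(d\psi)$, the $S_2$-component of $d\psi$ (and symmetrically for $S_2$). So the condition $\mathcal{N}(T_i) = 0$ asserts precisely that differentiating sections of $S_1$ and $S_2$ along $T_i$ produces no cross terms, i.e.\ that both $S_1$ and $S_2$ are parallel along the leaves of $T_i$ with respect to $d$.

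First I would fix, say, $i=1$ and $X \in \Gamma T_1$, and aim to show the equivalence
\[ s_1 \text{ is constant along } T_1 \quad\Longleftrightarrow\quad \mathcal{N}(X) = 0. \]
For the forward direction, suppose $f$ parametrises a channel surface with $T_1$ the circular curvature direction, so $d_X \sigma_1 \in \Gamma s_1$ for $\sigma_1 \in \Gamma s_1$. Using the stated formula $S_2 = \langle \sigma_2, d_X\sigma_2, d_X d_X \sigma_2 \rangle$, I would differentiate the three generators of $S_2$ along $X$ and check that each derivative stays in $S_2$; the key computation is that $d_X d_X d_X \sigma_2$ remains in $S_2$, which should follow because $S_2$ being $d$-stable along $T_1$ is equivalent to a second-order ODE structure in the $X$-direction. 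Symmetrically, from $S_1 = \langle \sigma_1, d_Y\sigma_1, d_Y d_Y \sigma_1 \rangle$, I would verify that $d_X$ of each generator lands back in $S_1$, which is where the hypothesis $d_X \sigma_1 \in \Gamma s_1$ is essential. Since $d$ preserves both $S_1$ and $S_2$ along $X$, the decomposition $d = \mathcal{D} + \mathcal{N}$ forces the off-diagonal part $\mathcal{N}(X)$ to vanish.

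For the converse, assume $\mathcal{N}(X) = 0$, so $d_X$ preserves the splitting; in particular $d_X \sigma_1 \in \Gamma S_1$. Since $\sigma_1 \in \Gamma s_1 \le \Gamma f$ and, by the curvature-sphere condition, $d_X \sigma_1 \in \Gamma f$, I would intersect these memberships: $d_X \sigma_1 \in \Gamma(S_1 \cap f)$. The goal is to identify $S_1 \cap f$ with $s_1$. Because $S_1$ has signature $(2,1)$ and contains $s_1 = \langle \sigma_1 \rangle$, while $f = s_1 \oplus s_2$ is isotropic with $s_2 \le S_2 = S_1^\perp$, an orthogonality/dimension count shows $S_1 \cap f = s_1$. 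Hence $d_X \sigma_1 \in \Gamma s_1$, which is exactly the channel condition, and Proposition~\ref{prop:legchannel} concludes. The corollary statement $\mathcal{N}(T_i)=0$ then follows since $T_i$ is spanned (pointwise) by such an $X$.

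The main obstacle I anticipate is the bookkeeping in verifying that $d$ preserves $S_1$ and $S_2$ along $T_1$ directly from the generator formulas, particularly controlling the third derivatives $d_X d_X d_X \sigma_2$ and $d_X d_Y d_Y \sigma_1$ without leaving the respective rank-3 bundles; here one must carefully use the contact condition $f^{(1)} = f^\perp$, the orthogonality $S_1 \perp S_2$, and the fact that $S_i$ makes second-order contact with $f$. The cleaner route, which I would prefer, is to argue abstractly via $\mathcal{N} \in \Omega^1(S_1 \wedge S_2)$ and the identification $S_1 \cap f = s_1$, $S_2 \cap f = s_2$, reducing the whole equivalence to the single statement that $\mathcal{N}(X)=0$ iff $d_X s_1 \le S_1$ iff $d_X s_1 \le s_1$, thereby sidestepping most of the explicit differentiation.
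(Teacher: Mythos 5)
Your proposal is correct and takes essentially the same route as the paper, whose ``proof'' is the one-line deduction of the corollary from Proposition~\ref{prop:legchannel} together with the generator formulas $S_{1}=\langle \sigma_{1}, d_{Y}\sigma_{1}, d_{Y}d_{Y}\sigma_{1}\rangle$, $S_{2}=\langle \sigma_{2}, d_{X}\sigma_{2}, d_{X}d_{X}\sigma_{2}\rangle$ (the paper having already observed that for a channel surface the splitting $V\oplus V^{\perp}$, with $V$ depending only on $I$, is the Lie cyclide splitting); your identification $S_{1}\cap f=s_{1}$ together with $d_{X}\sigma_{1}\in\Gamma f$ for the converse, and the observation that $d_{X}$-stability of $S_{1}$ forces that of $S_{2}=S_{1}^{\perp}$ because $d$ is metric and $\mathcal{N}\in\Omega^{1}(S_{1}\wedge S_{2})$ is skew, supply precisely the details the paper leaves to the reader. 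One caution: your remark that $d_{X}d_{X}d_{X}\sigma_{2}\in\Gamma S_{2}$ ``should follow'' from a second-order ODE structure is circular as stated (that containment \emph{is} the stability of $S_{2}$ being proved, and it fails for non-channel Legendre maps), but your own preferred ``cleaner route'' --- verify $d_{X}$-stability of $S_{1}$ from $d_{X}\sigma_{1}\in\Gamma s_{1}$ via the commutator bookkeeping you flag, then pass to the orthogonal complement --- correctly sidesteps this check, so the slip is harmless.
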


\subsection{Channel surfaces as $\Omega_{0}$-surfaces}
In~\cite{MN2006} a class of surfaces, called Lie applicable surfaces, are shown to be the only surfaces in Lie sphere geometry that admit second order deformations. It is shown that this class of surfaces naturally splits into two subclasses, $\Omega$-surfaces and $\Omega_{0}$-surfaces. This is the Lie sphere geometric analogue of $R$- and $R_{0}$-surfaces in projective geometry. Although $\Omega_{0}$-surfaces are objects of Lie sphere geometry, they were classically defined~\cite{D1911i,D1911ii,D1911iii} as those surfaces in space forms which satisfy 
\begin{equation}
\label{eqn:demom0}
\left( \frac{V}{U}\frac{\sqrt{E}}{\sqrt{G}}\frac{\kappa_{1,u}}{\kappa_{1}-\kappa_{2}}\right)_{v} =0 \quad \text{or}\quad  \left( \frac{U}{V}\frac{\sqrt{G}}{\sqrt{E}}\frac{\kappa_{2,v}}{\kappa_{1}-\kappa_{2}}\right)_{u}=0 ,
\end{equation}
for some functions $U$ of $u$ and $V$ of $v$, in terms of curvature line coordinates $(u,v)$, where $E$ and $G$ denote the usual coefficients of the first fundamental form and $\kappa_{1}$ and $\kappa_{2}$ denote the principal curvatures. Since channel surfaces in space forms are those that satisfy either $\kappa_{1,u}=0$ or $\kappa_{2,v}=0$, it is immediate that these surfaces are $\Omega_{0}$-surfaces and in fact any choice of functions $U$ and $V$ will satisfy~(\ref{eqn:demom0}). 

We shall use the following gauge-theoretic definition of $\Omega_{0}$-surfaces:

\begin{definition}[{\cite[Definition 3.1]{P2017}}]
A Legendre map $f:\Sigma\to \mathcal{Z}$ is an $\Omega_{0}$-surface if there exists a closed 1-form $\eta\in \Omega^{1}(f\wedge f^{\perp})$ such that $[\eta\wedge\eta]=0$ and 
\[ q(X,Y)= tr (\sigma\mapsto \eta(X)d_{Y}\sigma:f\to f)\]
is a non-zero degenerate quadratic differential. 
\end{definition}

In fact, given a closed 1-form $\eta\in \Omega^{1}(f\wedge f^{\perp})$, one obtains a family of such closed 1-forms called the \textit{gauge orbit} of $\eta$ by defining $\tilde{\eta}:= \eta - d\tau$, for any $\tau\in \Gamma (\wedge^{2}f)$. Furthermore, the quadratic differential is well defined on this gauge orbit, i.e., $\tilde{q}=q$, where $\tilde{q}$ denotes the quadratic differential of $\tilde{\eta}$. It was shown in~\cite{P2017} that for $\Omega_{0}$-surfaces there exists a special member of this gauge orbit called the \textit{middle potential} satisfying $\eta\in \Omega^{1}(s_{i}\wedge f^{\perp})$ for one of the curvature sphere congruences $s_{i}$, namely, 
\[ \eta = \sigma_{i}\wedge \star d\sigma_{i}\]
for some lift $\sigma_{i}\in \Gamma s_{i}$. In this case we say that $s_{i}$ is an isothermic curvature sphere congruence. 

Now suppose that a Legendre map $f$ parametrises a channel surface. Then by Proposition~\ref{prop:legchannel} one of the curvature spheres, say $s_{1}$, is constant along the leaves of $T_{1}$. We may choose a lift $\sigma_{1}$ of $s_{1}$ so that $d|_{T_{1}}\sigma_{1} =0$. Such a lift is determined up to multiplication by a function $\mu:\Sigma\to \mathbb{R}$ such that $d|_{T_{1}}\mu=0$. Now consider $d(\star d\sigma_{1})$. If we let $X\in \Gamma T_{1}$ and $Y\in \Gamma T_{2}$, then 
\begin{align*}
d(\star d\sigma_{1})(X,Y) &= d_{X}(\star d_{Y}\sigma_{1}) - d_{Y}(\star d_{X}\sigma_{1}) - \star d_{[X,Y]}\sigma_{1}\\
&= -d_{X}d_{Y}\sigma_{1} - d_{Y}d_{X}\sigma_{1} - d_{\star[X,Y]}\sigma_{1}\\
&= - 2d_{Y}d_{X}\sigma_{1} - d_{[X,Y] + \star[X,Y]}\sigma_{1}\\
&= 0,
\end{align*}
since $d|_{T_{1}}\sigma_{1} =0$ and $[X,Y] + \star[X,Y] \in \Gamma T_{1}$. Hence, $d(\star d\sigma_{1})=0$. This implies that the $f\wedge f^{\perp}$ valued 1-form 
\begin{equation}
\label{eqn:chanform}
\eta = \sigma_{1}\wedge \star d\sigma_{1}
\end{equation}
is closed. Since $\eta(T_{1})=0$, it follows trivially that $[\eta\wedge \eta]=0$. Furthermore, the quadratic differential 
\[ q(X,Y) = tr(\sigma\mapsto \eta(X)d_{Y}\sigma) = - (\star d_{X}\sigma_{1},d_{Y}\sigma_{1}),\]
is non-zero, taking values in $(T^{*}_{2})^{2}$. Hence, $f$ is an $\Omega_{0}$-surface.

In summary, we have seen in two different ways that:

\begin{proposition}
Channel surfaces are $\Omega_{0}$-surfaces. 
\end{proposition}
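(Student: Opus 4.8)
The statement is a direct consequence of the two arguments assembled above, so the plan is simply to package them into a proof and to identify which one does the real work. The quickest route is the classical characterisation via~(\ref{eqn:demom0}): since any channel surface in a space form satisfies $\kappa_{1,u}=0$ or $\kappa_{2,v}=0$, the relevant numerator vanishes identically and the defining equation holds for \emph{every} choice of functions $U$ and $V$. This immediately places channel surfaces among the $\Omega_{0}$-surfaces, with no further computation required.

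To make the argument intrinsic to Lie sphere geometry---and independent of a choice of space form---I would instead exhibit a potential realising the gauge-theoretic definition directly. By Proposition~\ref{prop:legchannel} one curvature sphere congruence, say $s_{1}$, is constant along the leaves of $T_{1}$, so one may select the distinguished lift $\sigma_{1}\in\Gamma s_{1}$ with $d|_{T_{1}}\sigma_{1}=0$. The natural candidate is then the middle potential $\eta=\sigma_{1}\wedge\star d\sigma_{1}$ of~(\ref{eqn:chanform}), which by construction lies in $\Omega^{1}(s_{1}\wedge f^{\perp})\subseteq\Omega^{1}(f\wedge f^{\perp})$. The plan is to verify the three defining requirements for $\eta$ in turn: that $\eta$ is closed, that $[\eta\wedge\eta]=0$, and that the associated quadratic differential $q$ is non-zero and degenerate.

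The main obstacle---indeed the only genuine calculation---is the closedness of $\eta$, which reduces to showing $d(\star d\sigma_{1})=0$. I expect this to hinge on two structural facts: the defining property $d|_{T_{1}}\sigma_{1}=0$, and the observation that $\star$ acts as $\pm\mathrm{id}$ on the two curvature line directions, so that $[X,Y]+\star[X,Y]\in\Gamma T_{1}$ for $X\in\Gamma T_{1}$ and $Y\in\Gamma T_{2}$; this collapses the surviving bracket term and forces the exterior derivative to vanish. Once closedness is in hand, the remaining two conditions are immediate. Because $\eta(T_{1})=0$ and $T\Sigma=T_{1}\oplus T_{2}$ has rank two, the skew-symmetrisation defining $[\eta\wedge\eta]$ can never pair two independent directions, so $[\eta\wedge\eta]=0$ trivially. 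Finally, $q(X,Y)=-(\star d_{X}\sigma_{1},d_{Y}\sigma_{1})$ is visibly non-zero and takes values in $(T^{*}_{2})^{2}$, hence is degenerate, completing the verification that $f$ is an $\Omega_{0}$-surface.
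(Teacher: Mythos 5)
Your proposal is correct and mirrors the paper's own treatment exactly: the paper likewise notes that the classical characterisation~(\ref{eqn:demom0}) is immediate since $\kappa_{1,u}=0$ or $\kappa_{2,v}=0$, and then gives the intrinsic argument with the potential $\eta=\sigma_{1}\wedge\star d\sigma_{1}$ from~(\ref{eqn:chanform}), proving closedness via $d|_{T_{1}}\sigma_{1}=0$ and $[X,Y]+\star[X,Y]\in\Gamma T_{1}$, with $[\eta\wedge\eta]=0$ following trivially from $\eta(T_{1})=0$ and $q$ non-zero with values in $(T_{2}^{*})^{2}$. No gaps; this is the same two-pronged proof the paper presents.
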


Given a function $\mu:\Sigma\to \mathbb{R}$ such that $d|_{T_{1}}\mu=0$, by defining $\tilde{\sigma}_{1}=\mu\sigma_{1}$ we have that 
\[ \tilde{\eta} = \tilde{\sigma}_{1}\wedge \star d\tilde{\sigma}_{1}\]
is a closed 1-form with values in $f\wedge f^{\perp}$. We then have that the quadratic differential $\tilde{q}$ satisfies $\tilde{q}=\mu^{2} q$. Therefore, since the quadratic differentials do not coincide we have that $\eta$ and $\tilde{\eta}$ do not belong to the same gauge orbit. 

It is well known that $\Omega_{0}$-surfaces, and more generally Lie applicable surfaces, constitute an integrable system, stemming from the presence of a 1-parameter family of flat connections:

\begin{lemma}[{\cite[Lemma 4.2.6]{C2012i}}]
Suppose that $\eta\in \Omega^{1}(f\wedge f^{\perp})$ is closed and $[\eta\wedge \eta]=0$. Then $\{d+t\eta\}_{t\in\mathbb{R}}$ is a 1-parameter family of flat connections. 
\end{lemma}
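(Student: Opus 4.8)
The plan is to compute directly the curvature of the connection $d+t\eta$ and show that it vanishes identically in $t$ by invoking the two hypotheses. Since $\eta$ takes values in $f\wedge f^{\perp}\le \wedge^{2}\underline{\mathbb{R}}^{4,2}\cong \mathfrak{o}(4,2)$, each $t\eta$ is an $\mathfrak{o}(4,2)$-valued $1$-form, so $d+t\eta$ is a genuine metric connection on $\underline{\mathbb{R}}^{4,2}$ and its curvature $R^{t}$ is a well-defined $2$-form with values in $\mathfrak{o}(4,2)$.

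First I would recall the standard structure equation for a connection obtained by perturbing a flat connection by a Lie-algebra-valued $1$-form. Writing $R^{t}(X,Y) = (d+t\eta)_{X}(d+t\eta)_{Y} - (d+t\eta)_{Y}(d+t\eta)_{X} - (d+t\eta)_{[X,Y]}$ and expanding, the purely second-order terms in $d$ cancel because the trivial connection $d$ is flat. Collecting the remaining terms by their power of $t$ gives
\[ R^{t} = t\,d\eta + \frac{t^{2}}{2}[\eta\wedge\eta],\]
where $d\eta(X,Y) = d_{X}(\eta(Y)) - d_{Y}(\eta(X)) - \eta([X,Y])$ is the exterior derivative with respect to $d$, and $[\eta\wedge\eta]$ is the bracket of $\mathfrak{o}(4,2)$-valued forms defined in the Preliminaries. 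The one piece of bookkeeping to watch is the factor $\tfrac{1}{2}$, which arises because the paper's convention gives $[\eta\wedge\eta](X,Y) = 2[\eta(X),\eta(Y)]$, whereas the quadratic contribution to the curvature is $[\eta(X),\eta(Y)]$.

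With this identity in hand the conclusion is immediate: the hypothesis that $\eta$ is closed kills the linear term, $d\eta=0$, and the hypothesis $[\eta\wedge\eta]=0$ kills the quadratic term. Hence $R^{t}=0$ for every $t\in\mathbb{R}$, so $\{d+t\eta\}_{t\in\mathbb{R}}$ is a $1$-parameter family of flat connections.

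I do not expect any genuine obstacle here; the content is entirely the structure equation, and the proof reduces to a routine expansion together with correct matching of the paper's wedge-bracket normalisation. The only point requiring care is confirming that the cross terms linear in $t$ assemble exactly into $d\eta$, rather than into a covariant exterior derivative with respect to some other connection; this holds precisely because the background connection $d$ is the trivial, flat connection on $\underline{\mathbb{R}}^{4,2}$.
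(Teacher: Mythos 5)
Your computation is correct: with the paper's convention $[\eta\wedge\eta](X,Y)=2[\eta(X),\eta(Y)]$, the curvature of $d+t\eta$ is indeed $R^{t}=t\,d\eta+\tfrac{t^{2}}{2}[\eta\wedge\eta]$, and both hypotheses kill the respective terms. Note that the paper itself offers no proof --- the lemma is quoted from \cite[Lemma 4.2.6]{C2012i} --- but your argument is exactly the standard structure-equation computation that the cited source uses, including the correct handling of the factor $\tfrac{1}{2}$ and the observation that the linear terms assemble into the plain exterior derivative $d\eta$ because the background connection is the flat trivial one.
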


It was shown in~\cite{BHPR2017} that one may distinguish subclasses of surfaces amongst $\Omega$-surfaces by using polynomial conserved quantities of the aforementioned family of flat connections. Furthermore, it was shown that $\Omega_{0}$-surfaces possessing a constant conserved quantity project to tubular surfaces in certain space forms. We shall now investigate general polynomial conserved quantities of $\Omega_{0}$-surfaces. Firstly, let us recall the definition of a polynomial conserved quantity:

\begin{definition}
A non-zero polynomial $p = p(t)\in\Gamma \underline{\mathbb{R}}^{4,2}[t]$ is called a polynomial conserved quantity of $\{d+t\eta\}_{t\in\mathbb{R}}$ if $p(t)$ is a parallel section of $d+t\eta$ for all $t\in\mathbb{R}$.
\end{definition}

The following lemmata show that we may distinguish channel surfaces from general $\Omega_{0}$-surfaces by the presence of a polynomial conserved quantity:

\begin{lemma}
Channel surfaces admit an $\Omega_{0}$-structure with a linear conserved quantity. 
\end{lemma}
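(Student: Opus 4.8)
The plan is to exhibit an explicit linear polynomial $p(t) = p_0 + t\, p_1$ and show it is parallel for $d + t\eta$, where $\eta$ is the $\Omega_0$-structure constructed in equation~(\ref{eqn:chanform}). Since $f$ parametrises a channel surface, Proposition~\ref{prop:legchannel} gives a curvature sphere congruence, say $s_1$, that is constant along the leaves of $T_1$, and we have already chosen a lift $\sigma_1 \in \Gamma s_1$ with $d|_{T_1}\sigma_1 = 0$, yielding the closed form $\eta = \sigma_1 \wedge \star\, d\sigma_1$ satisfying $[\eta\wedge\eta]=0$. The natural guess for the linear conserved quantity is $p(t) = \sigma_1 - t\, \sigma_1 \wedge(\cdot)$ acting appropriately; more precisely, since $\eta(X)$ for $X\in\Gamma T_1$ vanishes and $\eta$ is built from $\sigma_1$, the constant term $p_1$ should be $\sigma_1$ itself, so that $\eta(\cdot)\, p_1 = (\sigma_1\wedge\star d\sigma_1)\sigma_1 = 0$ because $a\wedge b\,(c)$ annihilates $c$ when $c$ is proportional to one of the wedge factors.

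First I would write out the parallel-section conditions obtained by collecting powers of $t$ in $(d + t\eta)(p_0 + t p_1) = 0$. Matching coefficients gives three equations: the $t^0$ term $d p_0 = 0$, the $t^1$ term $d p_1 + \eta(\cdot)\,p_0 = 0$, and the $t^2$ term $\eta(\cdot)\,p_1 = 0$. The cleanest approach is to take $p_1 = \sigma_1$. Then the $t^2$ equation $\eta\, \sigma_1 = (\sigma_1 \wedge \star d\sigma_1)(\sigma_1) = (\sigma_1,\sigma_1)\star d\sigma_1 - (\star d\sigma_1, \sigma_1)\sigma_1$ vanishes because $(\sigma_1,\sigma_1)=0$ (as $\sigma_1$ is a lift of a null line) and $(\star d\sigma_1, \sigma_1) = 0$ (since $\star d\sigma_1 \in \Gamma f^\perp$ by the contact condition and $\sigma_1\in\Gamma f$). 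The $t^1$ equation then demands $d p_0 = -\eta\,\sigma_1$ acting as a $1$-form, but I have just shown $\eta\,\sigma_1 = 0$, so the equation reduces to $d p_0 = 0$, forcing $p_0$ to be a constant vector.

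The remaining task is to identify the correct constant $p_0$. Here I would use the structure of the middle potential more carefully: for a channel surface the envelope $s_1$ is a sphere curve depending on a single parameter, so the Lie cyclide $S_1 = \langle \sigma_1, d_Y\sigma_1, d_Y d_Y\sigma_1\rangle$ (with $Y\in\Gamma T_2$) and its orthogonal complement encode a fixed $(2,1)$-subspace direction. The expectation is that $p_0$ is a constant vector spanning the one-dimensional space that the sphere curve $s_1$ ``revolves around'' — geometrically the axis or the common orthogonal sphere of the family. I would verify directly that $d p_0 = 0$ by checking that this candidate vector is annihilated by both $d|_{T_1}$ and $d|_{T_2}$, using $d|_{T_1}\sigma_1 = 0$ and the fact that $\sigma_1$ traces a curve constrained to lie in a fixed linear system.

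The main obstacle I anticipate is pinning down $p_0$ as a genuinely constant (parallel for $d$) section rather than merely a section of some natural subbundle, and confirming the linear conserved quantity is nontrivial, i.e. $p(t)$ is not identically the trivial solution. This amounts to showing that the one-parameter sphere family $s_1$ has a nonzero fixed point under $d$ in $\underline{\mathbb{R}}^{4,2}$; I would argue this by differentiating the defining relations of the channel surface and showing the resulting linear ODE along the $I$-direction admits a constant solution transverse to $f$, so that $p_0 \neq 0$ and $p(t)=p_0 + t\sigma_1$ is genuinely linear (degree exactly one) and nowhere zero. Once $p_0$ is identified and verified constant, together with the already-established vanishing of $\eta\,\sigma_1$, all three coefficient equations hold, establishing that $\{d+t\eta\}$ admits the linear conserved quantity $p(t)=p_0+t\sigma_1$.
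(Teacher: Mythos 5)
There is a genuine error at the heart of your argument, and it occurs exactly where you state it is cleanest. You correctly list the coefficient equations for $(d+t\eta)(p_{0}+tp_{1})=0$: the $t^{0}$ term $dp_{0}=0$, the $t^{1}$ term $dp_{1}+\eta\,p_{0}=0$, and the $t^{2}$ term $\eta\,p_{1}=0$. But in the next step you substitute $p_{1}=\sigma_{1}$ into the wrong slot: with $p_{1}=\sigma_{1}$ the $t^{1}$ equation reads $d\sigma_{1}+\eta\,p_{0}=0$, not ``$dp_{0}=-\eta\,\sigma_{1}$''. Your claim that the $t^{1}$ equation ``reduces to $dp_{0}=0$'' because $\eta\,\sigma_{1}=0$ is therefore false, and the equation is far from vacuous: since the quadratic differential $q$ of $\eta$ in~(\ref{eqn:chanform}) is non-zero, $d\sigma_{1}$ does not vanish along $T_{2}$, so for $p_{0}=0$ (or a generic constant $p_{0}$) one gets $(d+t\eta)(p_{0}+t\sigma_{1})\ni t\,d\sigma_{1}\neq 0$. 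Computing $\eta(X)p_{0}=(\sigma_{1},p_{0})\star d_{X}\sigma_{1}-(\star d_{X}\sigma_{1},p_{0})\sigma_{1}$ and using that $d\sigma_{1}$ is purely $T_{2}^{*}$-valued (so $\star d\sigma_{1}=-d\sigma_{1}$), the $t^{1}$ equation holds if and only if $(\sigma_{1},p_{0})$ equals a fixed non-zero constant, the condition $(d\sigma_{1},p_{0})=0$ then being automatic by differentiating $(\sigma_{1},p_{0})$ with $p_{0}$ constant. Your verifications of $\eta\,\sigma_{1}=0$ and of $dp_{0}=0$ for constant $p_{0}$ are fine; it is this middle, coupling equation that your proposal never satisfies.

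The slot mix-up then sends the remainder of the proposal down a wrong path: you hunt for a geometrically distinguished constant vector $p_{0}$ (an ``axis'' of the sphere family) and concede that pinning it down is the main unresolved obstacle. No such distinguished vector is needed, and in general none exists: the paper's proof takes an \emph{arbitrary} non-zero $\mathfrak{p}\in\mathbb{R}^{4,2}$ (with $(\sigma_{1},\mathfrak{p})$ nowhere zero) and instead rescales the lift, $\tilde{\sigma}_{1}=-\frac{1}{(\sigma_{1},\mathfrak{p})}\sigma_{1}$, to achieve the required normalisation $(\tilde{\sigma}_{1},\mathfrak{p})=-1$. Because $d|_{T_{1}}\sigma_{1}=0$ and $\mathfrak{p}$ is constant, the function $(\sigma_{1},\mathfrak{p})$ is constant along $T_{1}$, so $\tilde{\sigma}_{1}$ still satisfies $d|_{T_{1}}\tilde{\sigma}_{1}=0$ and $\tilde{\eta}=\tilde{\sigma}_{1}\wedge\star d\tilde{\sigma}_{1}$ is again closed; as noted in the paper, this is a genuinely different $\Omega_{0}$-structure (the quadratic differential changes by $\mu^{2}$), which is precisely why the lemma asserts that channel surfaces \emph{admit an} $\Omega_{0}$-structure with a linear conserved quantity rather than that the structure~(\ref{eqn:chanform}) you fixed at the outset already has one. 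With that regauging, all three coefficient equations hold for $p(t)=\mathfrak{p}+t\tilde{\sigma}_{1}$, and non-triviality is immediate from $\mathfrak{p}\neq 0$. As written, your proposal proves only the $t^{0}$ and $t^{2}$ equations and so does not establish the lemma.
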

\begin{proof}
Let $\mathfrak{p}$ be a non-zero vector in $\mathbb{R}^{4,2}$. Let $\sigma_{1}\in \Gamma s_{1}$ be a lift of $s_{1}$ such that $d|_{T_{1}}\sigma_{1} = 0$. Then the lift $\tilde{\sigma}_{1}:=- \frac{1}{(\sigma_{1},\mathfrak{p})}\sigma_{1}$ satisfies $d|_{T_{1}}\tilde{\sigma}_{1}=0$ and $(\tilde{\sigma}_{1}, \mathfrak{p})=-1$. Now if we let $\tilde{\eta}:= \tilde{\sigma}_{1}\wedge \star d\tilde{\sigma}_{1}$ then $\tilde{\eta}$ is closed and $\mathfrak{p}+t\tilde{\sigma}_{1}$ is a linear conserved quantity of $d+t\tilde{\eta}$. 
\end{proof}

\begin{lemma}
An $\Omega_{0}$-surface with a polynomial conserved quantity is a channel surface.
\end{lemma}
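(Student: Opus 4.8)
The plan is to extract the recursion defining a polynomial conserved quantity, reduce to the middle potential, and then show that on any region where $f$ fails to be a channel surface the conserved quantity is forced to vanish identically. Write $p(t)=\sum_{i=0}^{n}t^{i}p_{i}$ with $p_{n}\neq 0$. Collecting powers of $t$ in $(d+t\eta)p(t)=0$ gives $dp_{0}=0$ (so $p_{0}$ is a constant vector), $dp_{j}=-\eta p_{j-1}$ for $1\le j\le n$, and the top relation $\eta p_{n}=0$. First I would reduce to the middle potential: if $\tilde{\eta}=\eta-d\tau$ is any other member of the gauge orbit, $\tau\in\Gamma(\wedge^{2}f)$, then $(1+t\tau)p(t)$ is a (nonzero) polynomial conserved quantity of $d+t\tilde{\eta}$. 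This follows from a short computation using $[\eta(X),\tau]=0$ and $(d\tau)\tau=0$, both of which hold because every element of $f\wedge f^{\perp}$ annihilates $f$ and brackets trivially with $\wedge^{2}f$. Hence we may assume $\eta=\sigma_{1}\wedge\star d\sigma_{1}$ is the middle potential, with $s_{1}$ the isothermic curvature sphere congruence and curvature subbundle $T_{1}$.

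Next I would work in curvature line coordinates $(u,v)$ with $\partial_{u}\in\Gamma T_{1}$ and $\partial_{v}\in\Gamma T_{2}$, writing $d_{u}\sigma_{1}=a\sigma_{1}+b\sigma_{2}$ (valid since $d_{u}\sigma_{1}\in\Gamma f$). Then $\eta(\partial_{u})=b\,\sigma_{1}\wedge\sigma_{2}$ and $\eta(\partial_{v})=-\sigma_{1}\wedge d_{v}\sigma_{1}$, and by Proposition~\ref{prop:legchannel} the map $f$ parametrises a channel surface precisely when $b\equiv 0$. Suppose, for contradiction, that $b$ is nowhere zero on some nonempty open set $U$ (which exists as soon as $b\not\equiv 0$). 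I claim that every $p_{j}$ vanishes on $U$; since each $p(t)$ is a parallel section of the flat connection $d+t\eta$ and $\Sigma$ is connected, this forces $p(t)\equiv 0$, contradicting $p_{n}\neq 0$.

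The heart of the matter is a downward induction proving $p_{j}\equiv 0$ on $U$. At each stage one knows $\eta p_{j}=0$ on $U$ (for $j=n$ this is the top relation; for $j<n$ it follows from $p_{j+1}\equiv 0$ on $U$ together with $dp_{j+1}=-\eta p_{j}$). Since $b\neq 0$, evaluating $\eta p_{j}=0$ on $\partial_{u}$ and $\partial_{v}$ yields $(\sigma_{1},p_{j})=(\sigma_{2},p_{j})=(d_{v}\sigma_{1},p_{j})=0$. Decomposing $p_{j}$ along the Lie cyclide splitting $\underline{\mathbb{R}}^{4,2}=S_{1}\oplus S_{2}$ and using $\sigma_{1},d_{v}\sigma_{1}\in\Gamma S_{1}$ and $\sigma_{2},d_{u}\sigma_{2},d_{u}d_{u}\sigma_{2}\in\Gamma S_{2}$, these orthogonality relations pin $p_{j}$ down to the form $\lambda\sigma_{1}+\mu\sigma_{2}+\nu\,d_{u}\sigma_{2}$. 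The relation $d_{u}p_{j}=-\eta(\partial_{u})p_{j-1}\in\Gamma f$ (respectively $d_{u}p_{0}=0$) forces the $S_{2}$-component of $d_{u}p_{j}$ into $\langle\sigma_{2}\rangle$, whence the linear independence of $\sigma_{2},d_{u}\sigma_{2},d_{u}d_{u}\sigma_{2}$ gives $\nu=\mu=0$; thus $p_{j}=\lambda\sigma_{1}\in\Gamma s_{1}$. Finally, comparing the $\partial_{v}$- and $\partial_{u}$-components of $dp_{j}=-\eta p_{j-1}$ produces $\lambda=(\sigma_{1},p_{j-1})$ and $\lambda=-(\sigma_{1},p_{j-1})$, so $\lambda=0$ (the case $j=0$ instead uses $dp_{0}=0$ and $b\neq 0$ directly). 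This completes the induction and the proof.

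The main obstacle is precisely this last computation: squeezing enough linear-algebraic information out of $\eta p_{j}=0$ and $d_{u}p_{j}\in\Gamma f$ to trap each coefficient inside $s_{1}$ and then annihilate it, which is where the Lie cyclide splitting $S_{1}\oplus S_{2}$ and the explicit middle-potential form of $\eta$ are indispensable. The only other delicate point is the reduction to the middle potential, which rests on the gauge-covariance of conserved quantities established in the first step.
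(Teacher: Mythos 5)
Your proof is correct and takes essentially the same route as the paper: reduce to the middle potential $\eta=\sigma_{1}\wedge\star d\sigma_{1}$, use $\eta p_{j}=0$ to trap $p_{j}$ in $\langle\sigma_{1},\sigma_{2},d_{u}\sigma_{2}\rangle$ (the paper phrases this as $p_{d}\perp s_{1}^{(1)}$ rather than via the Lie cyclide splitting, but it is the same computation), kill $\mu,\nu$ by the linear independence of $\sigma_{2},d_{u}\sigma_{2},d_{u}d_{u}\sigma_{2}$, and force $\lambda=0$ from the opposite signs that $\star$ produces on the $T_{1}$- and $T_{2}$-components of $dp_{j}=-\eta p_{j-1}$. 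The paper stops after annihilating only the top coefficient $p_{d}$ and starts from the middle potential without comment, so your explicit gauge-covariance reduction via $(1+t\tau)p(t)$ and your downward induction through all coefficients on $U$, followed by parallel continuation on connected $\Sigma$, are sound refinements that close two small gaps the paper leaves implicit.
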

\begin{proof}
Suppose that $f$ is an $\Omega_{0}$-surface with a closed 1-form $\eta = \sigma_{1}\wedge \star d\sigma_{1}$. Suppose further that $d+t\eta$ admits a polynomial conserved quantity $p(t)= p_{0}+ t p_{1} + ... + t^{d}p_{d}$ with $p_{d}\neq 0$. For a contradiction, let us assume that $f$ is not a channel surface. This implies that $s_{1}^{(1)}= f\oplus d\sigma_{1}(T_{2})$. 
Now, 
\[ 0=\eta p_{d}= (\sigma_{1}\wedge \star d\sigma_{1}) p_{d} = (\sigma_{1}, p_{d}) \star d\sigma_{1} - (\star d\sigma_{1}, p_{d}) \sigma_{1}.\]
Thus, $(\sigma_{1}, p_{d})=0$ and $(\star d\sigma_{1}, p_{d})=0$. Hence, $p_{d}\in \Gamma (s_{1}^{(1)})^{\perp}$. Thus, we may write 
\[ p_{d} = a\, \sigma_{1} + b\, \sigma_{2} + c\, d_{X}\sigma_{2},\] 
where $\sigma_{2}\in \Gamma s_{2}$ and $X\in \Gamma T_{1}$.
Since $dp_{d} = -\eta p_{d-1}$, one has that $dp_{d}\in \Omega^{1}(s_{1}^{(1)})$. This implies that 
\[ 0 = d_{X}p_{d}\, mod\, s_{1}^{(1)} = d_{X}c\, d_{X}\sigma_{2} + c\, d_{X}d_{X}\sigma_{2} + b\, d_{X}\sigma_{2}\, mod\, s_{1}^{(1)}.\]
Since $d_{X}\sigma_{2}$ and $d_{X}d_{X}\sigma_{2}$ are linearly independent, one has that $c=b=0$. Now, 
\[ a \, d\sigma_{1} \, mod\, s_{1} = - (\sigma_{1},p_{d-1}) \star d\sigma_{1}\, mod\, s_{1}.\]
This can only hold if $a = (\sigma_{1},p_{d-1}) =0$, which contradicts that $p_{d}\neq 0$. 

\end{proof}

\subsection{Calapso transforms of channel surfaces}

As previously mentioned, $\Omega_{0}$-surfaces have a rich transformation theory. One transformation that arises for these surfaces is the Calapso transformation. Suppose that $\eta\in\Omega^{1}(f\wedge f^{\perp})$ is closed and $[\eta\wedge \eta]=0$. Let $\{d+t\eta\}_{t\in \mathbb{R}}$ be the resulting 1-parameter family of flat connections. For each $t\in\mathbb{R}$, there exists a local orthogonal trivialising gauge transformation $T(t):\Sigma\to \textrm{O}(4,2)$, that is, 
\[ T(t)\cdot (d+t\eta) = d.\]

\begin{definition}
$f^{t}:=T(t)f$ is called a Calapso transform of $f$. 
\end{definition}

In~\cite{P2017} it was shown that $f^{t}$ is again a Lie applicable surface whose quadratic differential satisfies $q^{t}=q$. Furthermore, the curvature spheres of $f^{t}$ are given by $s_{1}^{t} = T(t)s_{1}$ and $s_{2}^{t}= T(t)s_{2}$ with respective curvature subbundles $T_{1}^{t}=T_{1}$ and $T_{2}^{t}=T_{2}$. 

Suppose that $f$ is a channel surface and, without loss of generality, suppose that $T_{1}$ is the circular curvature direction of $f$. Then the closed 1-form $\eta\in \Omega^{1}(f\wedge f^{\perp})$ constructed in~(\ref{eqn:chanform}) satisfies $\eta(T_{1})=0$. Now if $\sigma^{t}_{1}\in \Gamma s_{1}^{t}$, then $\sigma^{t}_{1} = T(t)\sigma_{1}$, for some $\sigma_{1}\in \Gamma s_{1}$. Hence, for $X\in \Gamma T_{1}$, 
\[ d_{X}\sigma^{t}_{1} = T(t)(d_{X}+ t\eta(X))\sigma_{1} = T(t)d_{X}\sigma_{1}\in \Gamma s^{t}_{1},\]
since $d_{X}\sigma_{1}\in \Gamma s_{1}$. Therefore, $s_{1}^{t}$ is constant along the leaves of $T_{1}$ and thus $f^{t}$ is a channel surface with circular direction $T_{1}$. 

\begin{theorem}
The Calapso transforms of channel surfaces are channel surfaces with the same circular curvature direction. 
\end{theorem}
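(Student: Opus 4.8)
The plan is to exploit the fact that the channel structure singles out an $\Omega_0$-structure already adapted to the circular curvature direction. First I would reduce, without loss of generality, to the case where $T_1$ is the circular curvature direction of $f$, so that $s_1$ is constant along the leaves of $T_1$. For such a surface the natural closed 1-form to work with is the middle potential $\eta=\sigma_1\wedge\star d\sigma_1$ constructed in~(\ref{eqn:chanform}), whose defining feature is that $\eta(T_1)=0$. I would then invoke the results of~\cite{P2017} recalled above: the Calapso transform $f^t=T(t)f$ is again a Lie applicable surface, its curvature sphere congruences are $s_i^t=T(t)s_i$, and the curvature subbundles are preserved, $T_i^t=T_i$. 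Thus it suffices to verify that $s_1^t$ remains constant along the leaves of $T_1$ and then to appeal to Proposition~\ref{prop:legchannel}.

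The heart of the argument is a single computation. Choosing a lift $\sigma_1^t=T(t)\sigma_1\in\Gamma s_1^t$ and differentiating along $X\in\Gamma T_1$, the defining property $T(t)\cdot(d+t\eta)=d$ of the trivialising gauge gives
\[ d_X\sigma_1^t = T(t)\bigl(d_X+t\eta(X)\bigr)\sigma_1. \]
Because $\eta(T_1)=0$, the potential term vanishes, leaving $d_X\sigma_1^t=T(t)d_X\sigma_1$. Since $T_1$ is circular for $f$ we have $d_X\sigma_1\in\Gamma s_1$, and applying the pointwise invertible gauge $T(t)\in\textrm{O}(4,2)$ yields $d_X\sigma_1^t\in\Gamma s_1^t$, using $s_1^t=T(t)s_1$. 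Hence $s_1^t$ is constant along the leaves of $T_1^t=T_1$.

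By Proposition~\ref{prop:legchannel} this is exactly the condition for $f^t$ to parametrise a channel surface, with circular curvature direction $T_1$, which coincides with that of $f$. The only point demanding genuine care is the choice of $\Omega_0$-structure: the conclusion rests entirely on using the middle potential associated to the circular curvature sphere congruence, since it is precisely this choice that guarantees $\eta(T_1)=0$ and thereby annihilates the extra term in the gauged derivative. For a generic closed 1-form in the gauge orbit the potential would not vanish on $T_1$ and the computation would fail to close up, so I would stress at the outset that the channel structure distinguishes this preferred potential; everything else is then forced by the transformation behaviour of the curvature spheres recorded in~\cite{P2017}.
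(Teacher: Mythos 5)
Your proposal is correct and follows essentially the same route as the paper: both use the closed $1$-form $\eta=\sigma_{1}\wedge\star d\sigma_{1}$ from~(\ref{eqn:chanform}) with $\eta(T_{1})=0$, the gauge identity $T(t)\cdot(d+t\eta)=d$ to compute $d_{X}\sigma_{1}^{t}=T(t)d_{X}\sigma_{1}\in\Gamma s_{1}^{t}$ for $X\in\Gamma T_{1}$, and the preservation of curvature spheres and curvature subbundles under the Calapso transformation to conclude via Proposition~\ref{prop:legchannel}. Your closing remark about why this particular potential (rather than a generic member of a gauge orbit) is needed is a fair gloss on the same argument, not a deviation from it.
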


\section{Ribaucour transforms of channel surfaces}
\label{sec:rib}

Blaschke proved the following result regarding Ribaucour transforms of channel surfaces:

\begin{theorem}[\cite{B1929}]
\label{thm:blaschke}
Ribaucour transforms of channel surfaces have one family of spherical curvature lines. 
\end{theorem}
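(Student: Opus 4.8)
The plan is to work with the Ribaucour cyclide congruences $D_1$ and $D_2$ of Definition~\ref{def:ribcyc}, since these are the objects canonically attached to a Ribaucour pair, and to show that the constancy of one curvature sphere of $f$ along its circular direction forces one family of curvature lines of $\hat{f}$ onto spheres. Without loss of generality I assume that the circular curvature direction of $f$ is $T_1$, so that $s_1$ is constant along the leaves of $T_1$, and I fix a lift $\sigma_1\in\Gamma s_1$ with $d|_{T_1}\sigma_1=0$. Let $\hat{f}$ be a Ribaucour transform enveloping $s_0=f\cap\hat{f}$, with $\hat{T}_1=T_1$ and $\hat{T}_2=T_2$, and assume $s_0$ nowhere meets the curvature spheres, so that the description $D_1^{\perp}=\langle\sigma_0,d_X\sigma_0,d_Xd_X\sigma_0\rangle$ (with $X\in\Gamma T_1$) is available. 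The goal is to prove that the curvature lines of $\hat{f}$ tangent to $T_1$ are spherical.

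First I would record the effect of the channel hypothesis on $D_1=s_1\oplus\hat{s}_1\oplus d\sigma_1(T_2)$. Because $d|_{T_1}\sigma_1=0$, both $s_1$ and $d\sigma_1(T_2)=\langle d_Y\sigma_1\rangle$ (for $Y\in\Gamma T_2$) are constant along the leaves of $T_1$; hence along each such leaf the cyclide $D_1$ is obtained from the \emph{fixed} degenerate plane $P:=s_1\oplus d\sigma_1(T_2)$ by adjoining only the moving line $\hat{s}_1$, so that the fixed sphere $s_1$ is a curvature sphere, in one family, of \emph{every} Dupin cyclide $D_1$ along that leaf. I would then exploit the envelope relation between $\hat{f}$ and $D_1$: since $\hat{s}_1\le D_1$ and $s_0\le D_1^{\perp}$ one has $\hat{f}=\hat{s}_1\oplus s_0$, with one summand in $D_1$ and the other in $D_1^{\perp}$. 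Working in the orthogonal frame adapted to $\underline{\mathbb{R}}^{4,2}=D_1\oplus_{\perp}D_1^{\perp}$, I would differentiate the resulting orthogonality relations along $T_1$ and combine them with the Ribaucour normalisation of Lemma~\ref{lem:ribpar} — which here forces $d_X\hat{\sigma}_1\in\Gamma s_0$ for $X\in\Gamma T_1$ — in order to isolate, for each leaf of $T_1$, the sphere on which the corresponding curvature line of $\hat{f}$ should lie.

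The step I expect to be the main obstacle is precisely this transfer: upgrading ``$D_1$ carries a curvature sphere that is constant along $T_1$'' to ``the curvature line of $\hat{f}$ lies on a fixed sphere''. The difficulty is that $\hat{f}$ is in general \emph{not} a channel surface, so $D_1$ itself is not constant along $T_1$ and one cannot simply invoke constancy of the whole cyclide; instead one must pin down the single fixed member of the sphere curve built from $s_1$, $s_0$ and the cyclide splitting, and then verify the incidence by controlling how the derivatives of the relevant point-sphere curve of $\hat{f}$ distribute between the factors $D_1$ and $D_1^{\perp}$. This is where the hypothesis $d|_{T_1}\sigma_1=0$ must do the decisive work, separating the genuinely moving part of the enveloping sphere from its fixed part. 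Finally, carrying out the analogous analysis with $D_2$ along $T_2$ — where no such constancy is available — would exhibit the asymmetry between the two families and thereby confirm that exactly one family of curvature lines of $\hat{f}$ is spherical.
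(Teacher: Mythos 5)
Your proposal is not a proof but a proof plan whose decisive step is explicitly left open, so it has a genuine gap. (Note that the paper itself gives no argument for this theorem --- it is quoted from Blaschke --- so your attempt must stand on its own.) Everything you set up does check out: for the lift of Lemma~\ref{lem:ribpar} the channel condition $d_{X}\sigma_{1}\in\Gamma s_{1}$ together with $(d_{X}\sigma_{1},\hat{\sigma}_{1})=0$ and $(\sigma_{1},\hat{\sigma}_{1})\neq 0$ (which holds automatically, since $\langle\sigma_{0},\sigma_{1},\hat{\sigma}_{1}\rangle$ cannot be totally isotropic in $\mathbb{R}^{4,2}$) forces $d|_{T_{1}}\sigma_{1}=0$, so your two normalisations are compatible; the plane $P=s_{1}\oplus d\sigma_{1}(T_{2})$ is indeed constant along the leaves of $T_{1}$, because $d_{X}d_{Y}\sigma_{1}=d_{Y}d_{X}\sigma_{1}+d_{[X,Y]}\sigma_{1}\in\Gamma\langle d_{Y}\sigma_{1}\rangle$; and $d_{X}\hat{\sigma}_{1}\in\Gamma s_{0}$ is correct. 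But at the point where the theorem must actually be proved --- exhibiting, for each leaf of $T_{1}$, a fixed sphere containing the corresponding curvature line of $\hat{f}$ and verifying the incidence --- you only announce that this is ``the main obstacle'' and describe what would have to be done. No candidate sphere is ever produced. Compounding this, ``spherical curvature line'' is a M\"obius notion: one must fix a point sphere complex $\mathfrak{p}$ and work with the point sphere map $\hat{p}=\hat{f}\cap\langle\mathfrak{p}\rangle^{\perp}$, which your text alludes to (``point-sphere curve'') but never introduces, so the assertion to be verified is never even formulated in the model.

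The gap closes more simply than your obstacle paragraph suggests, and not quite along the route you sketch: the fixed sphere comes neither from $s_{1}$ nor from a distinguished member of some sphere pencil, but from the \emph{spacelike} line $\langle d_{Y}\sigma_{1}\rangle$ itself. With the normalised lifts one has $(\hat{\sigma}_{1},d_{Y}\sigma_{1})=0$ (from $d\sigma_{1}\in\Omega^{1}((s_{1}\oplus\hat{s}_{1})^{\perp})$) and $(\sigma_{0},d_{Y}\sigma_{1})=-(d_{Y}\sigma_{0},\sigma_{1})=0$ (since $d_{Y}\sigma_{0}\in\Gamma f^{\perp}$ and $\sigma_{1}\in\Gamma f$); hence $\hat{f}=s_{0}\oplus\hat{s}_{1}\perp\langle d_{Y}\sigma_{1}\rangle$ at every point, and since $\langle d_{Y}\sigma_{1}\rangle$ is constant along each leaf of $T_{1}$ (your fixed plane $P$), the whole strip of $\hat{f}$ over a leaf is orthogonal to one fixed line. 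Taking any point sphere complex $\mathfrak{p}$ and letting $m$ be the orthogonal projection of $d_{Y}\sigma_{1}$ to $\langle\mathfrak{p}\rangle^{\perp}$, $m$ is spacelike because $(d_{Y}\sigma_{1},d_{Y}\sigma_{1})>0$ and $\mathfrak{p}$ is timelike, and $(\hat{p},m)=(\hat{p},d_{Y}\sigma_{1})=0$ along the leaf; so each $T_{1}$-curvature line of $\hat{f}$ lies on the fixed $2$-sphere determined by $m$ (a sphere depending on the leaf, as it should), for \emph{every} choice of $\mathfrak{p}$. Finally, your closing claim that the analogous analysis of $D_{2}$ would show \emph{exactly} one family is spherical overshoots the statement and is false in general --- for a Ribaucour pair of Dupin cyclides both families are spherical --- the theorem asserts only the existence of one spherical family.
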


This section is devoted to the case where the Ribaucour transform of a channel surface is again a channel surface. 

\begin{theorem}
\label{thm:main}
Suppose that $s, \hat{s}:I\to\mathbb{P}(\mathcal{L})$ are two regular sphere curves that never span a contact element, i.e., $s$ is nowhere orthogonal to $\hat{s}$. Then the envelopes of $s$ and $\hat{s}$ are Ribaucour transforms\footnote{That is, one can parametrise the envelopes of $s$ and $\hat{s}$ such that they are Ribaucour transforms of each other in the sense of Definition~\ref{def:rib}.} of each other, with corresponding circular curvature directions, if and only if $s^{(1)}\oplus \hat{s}=\hat{s}^{(1)}\oplus s$. 
\end{theorem}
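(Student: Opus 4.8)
The plan is to recognise the claimed identity as the coincidence of the two Ribaucour cyclide congruences of Definition~\ref{def:ribcyc}, exploiting that for a channel surface the enveloped sphere curve \emph{is} the curvature sphere congruence along the circular direction. Fix lifts $\sigma\in\Gamma s$ and $\hat\sigma\in\Gamma\hat s$ and a nowhere-zero $Y\in\Gamma TI$. Since $s$ and $\hat s$ depend only on $I$, Proposition~\ref{prop:legchannel} and the discussion preceding it identify $s=s_1$ and $\hat s=\hat s_1$ as the curvature sphere congruences along the common circular direction $T_1=\hat T_1=TS^1$, so that $T_2=\hat T_2=TI$. The key observation is that $s^{(1)}=s\oplus d\sigma(T_2)$ and $\hat s^{(1)}=\hat s\oplus d\hat\sigma(\hat T_2)$, whence the two candidate Ribaucour cyclide congruences become
\[ D_1 = s_1\oplus\hat s_1\oplus d\sigma_1(T_2) = s^{(1)}\oplus\hat s \quad\text{and}\quad \hat D_1 = s_1\oplus\hat s_1\oplus d\hat\sigma_1(\hat T_2) = \hat s^{(1)}\oplus s. \]
Thus the stated identity $s^{(1)}\oplus\hat s=\hat s^{(1)}\oplus s$ is precisely the assertion $D_1=\hat D_1$.

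For the forward (only if) implication I would invoke Proposition~\ref{prop:ribcyc} directly: if the parametrised envelopes form a Ribaucour pair with $T_i=\hat T_i$, then the first condition of its characterising equation holds, namely $d\hat\sigma_1(\hat T_2)\le s_1\oplus\hat s_1\oplus d\sigma_1(T_2)$. Translated through the identification above this reads $\hat s^{(1)}\oplus s\le s^{(1)}\oplus\hat s$, and since both sides have rank $3$ and share the rank-$2$ bundle $s\oplus\hat s$, they coincide, which is the desired equation.

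For the converse (if) implication the real work is to manufacture a common enveloped sphere congruence. First I would verify that the hypothesis that $s$ and $\hat s$ never span a contact element is exactly what forces the common value $D_1:=s^{(1)}\oplus\hat s=\hat s^{(1)}\oplus s$ to have signature $(2,1)$: in the basis $(\sigma,\hat\sigma,d_Y\sigma)$ the Gram matrix has determinant $-(\sigma,\hat\sigma)^2(d_Y\sigma,d_Y\sigma)$, which is strictly negative since $(\sigma,\hat\sigma)\neq 0$ (non-orthogonality) and $(d_Y\sigma,d_Y\sigma)>0$ (regularity of $s$); as $D_1$ also contains the spacelike vector $d_Y\sigma$, its signature must be $(2,1)$ rather than $(0,3)$. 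Because $s^{(1)}\le D_1$ and $\hat s^{(1)}\le D_1$, the bundle $D_1$ is a legitimate choice of the auxiliary bundle $V$ in Lemma~\ref{lem:legchan} for \emph{both} sphere curves at once. Parametrising the projective lightcone of the single bundle $D_1^{\perp}$ by one map $s_0:I\times S^1\to\mathbb{P}(\mathcal L)$ and using it for both, I obtain Legendre maps $f=s\oplus s_0$ and $\hat f=\hat s\oplus s_0$ enveloping the common sphere congruence $s_0=f\cap\hat f$. Since $s$ and $\hat s$ depend only on $I$, both $f$ and $\hat f$ automatically carry the circular direction $T_1=\hat T_1=TS^1$ and hence $T_2=\hat T_2=TI$, so Definition~\ref{def:rib} is met and the pair is Ribaucour with corresponding circular curvature directions.

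I expect the signature computation, with its accompanying bookkeeping, to be the crux: one must confirm that the single non-degenerate bundle $D_1$ can simultaneously serve as the Lie-cyclide--type auxiliary bundle for both channel surfaces, and that the resulting $f$ and $\hat f$ are genuinely distinct, umbilic-free immersions with $s_0$ nowhere a curvature sphere, so that Definition~\ref{def:rib} (and, if one prefers to argue through it, the hypotheses of Proposition~\ref{prop:ribcyc}) genuinely apply. The remaining verifications, such as $s_0\neq s$ and $s_0\ne\hat s$ (both immediate from $D_1\cap D_1^{\perp}=\{0\}$) and the global existence of a common $S^1$-parametrisation, are routine.
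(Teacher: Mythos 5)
Your proposal is correct and follows essentially the same route as the paper's own proof: the forward direction via Proposition~\ref{prop:ribcyc}, identifying $s^{(1)}\oplus\hat{s}$ and $\hat{s}^{(1)}\oplus s$ with the Ribaucour cyclide congruences, and the converse by setting $V:=s^{(1)}\oplus\hat{s}=\hat{s}^{(1)}\oplus s$, parametrising the projective lightcone of $V^{\perp}$ by a single congruence $s_{0}$, and applying Lemma~\ref{lem:legchan} to $f=s\oplus s_{0}$ and $\hat{f}=\hat{s}\oplus s_{0}$. Your Gram-determinant computation showing $V$ has signature $(2,1)$ merely makes explicit a step the paper asserts without proof; otherwise the two arguments coincide.
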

\begin{proof}
Using the parametrisation of Section~\ref{sec:channel}, let $f,\hat{f}:I\times S^{1}\to \mathcal{Z}$ be Legendre maps enveloping $s$ and $\hat{s}$, respectively, such that $f$ and $\hat{f}$ are Ribaucour transforms of each other. Let $s_{0}:= f\cap \hat{f}$. Assuming that the circular curvature directions of $f$ and $\hat{f}$ correspond, one has that $s_{1}=s$ and $\hat{s}_{1}=\hat{s}$. It then follows by Proposition~\ref{prop:ribcyc} that 
\[ s^{(1)}\oplus \hat{s} = s_{1}\oplus \hat{s}_{1}\oplus d\sigma_{1}(T_{2})=  s_{1}\oplus \hat{s}_{1}\oplus d\hat{\sigma}_{1}(T_{2}) = \hat{s}^{(1)}\oplus s,\]
where $T_{2}= TI$. 

Conversely, suppose that $s^{(1)}\oplus \hat{s}=\hat{s}^{(1)}\oplus s=:V$. Since $V^{\perp}$ has signature $(2,1)$, we may at each point of $I$ parametrise the elements of the projective lightcone along $S^{1}$, i.e., we have $s_{0}:I\times S^{1}\to V^{\perp}$. Then let $f:=s_{0}\oplus s$ and $\hat{f}:=s_{0}\oplus \hat{s}$. $f$ and $\hat{f}$ are Legendre maps by Lemma~\ref{lem:legchan} and are Ribaucour transforms of each other because $T_{1}=\hat{T}_{1} = TS^{1}$ and $T_{2}=\hat{T}_{2}=TI$. 
\end{proof}

We can interpret the result of Theorem~\ref{thm:main} geometrically as follows. For $t\in I$ and sufficiently small non-zero $\epsilon$, $s(t)$, $s(t+\epsilon)$ and $\hat{s}(t)$ belong to one family of curvature spheres of a Dupin cyclide. By allowing $\epsilon$ to tend to zero, we obtain a unique Dupin cyclide at $t$. On the other hand by repeating the same process with $s(t)$, $\hat{s}(t)$ and $\hat{s}(t+\epsilon)$, we obtain another Dupin cyclide at $t$. By doing this for all $t\in I$, we obtain two $1$-parameter families of Dupin cyclides. The theorem states that the envelopes of $s$ and $\hat{s}$ are a Ribaucour pair if and only if these two families of Dupin cyclides coincide. 

We now have a result regarding when a general Ribaucour pair consists of channel surfaces. Recall from Definition~\ref{def:ribcyc} that associated to a Ribaucour pair of umbilic-free Legendre immersions are the Ribaucour cyclide congruences. In a straightforward manner, one deduces the following theorem:

\begin{theorem}\label{thm_Di}
Suppose that $f,\hat{f}:\Sigma\to \mathcal{Z}$ are a Ribaucour pair of umbilic-free Legendre immersions. Then $f$ and $\hat{f}$ are channel surfaces with corresponding circular curvature direction $T_{i}$ if and only if one of the Ribaucour cyclide congruences $D_{i}$ is constant along the leaves of $T_{i}$. 
\end{theorem}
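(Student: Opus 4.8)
```latex
The plan is to characterise the channel surface condition via Proposition~\ref{prop:legchannel} and Definition~\ref{def:ribcyc}, and then to show directly that the two formulations of the Ribaucour cyclide congruence $D_{i}$ make the equivalence almost tautological. Without loss of generality I would treat the case $i=1$, so that the claim becomes: $f$ and $\hat{f}$ both have $T_{1}$ as a circular curvature direction if and only if $D_{1}$ is constant along the leaves of $T_{1}$. By Proposition~\ref{prop:legchannel} the channel condition for $f$ is exactly that $s_{1}$ is constant along $T_{1}$, i.e. $d_{X}\sigma_{1}\in\Gamma s_{1}$ for $X\in\Gamma T_{1}$ and $\sigma_{1}\in\Gamma s_{1}$; similarly for $\hat{f}$ with $\hat{s}_{1}$ and $\hat{\sigma}_{1}$.

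For the forward direction, I would assume both surfaces are channel surfaces with circular direction $T_{1}$. Using the representation
\[ D_{1}=s_{1}\oplus\hat{s}_{1}\oplus d\sigma_{1}(T_{2}),\]
I differentiate a spanning frame along a vector field $X\in\Gamma T_{1}$. The sections $\sigma_{1}$ and $\hat{\sigma}_{1}$ have $X$-derivatives lying in $s_{1}$ and $\hat{s}_{1}$ respectively, hence inside $D_{1}$. The only delicate term is $d_{X}(d_{Y}\sigma_{1})$ for $Y\in\Gamma T_{2}$, which I would handle by commuting derivatives: writing $d_{X}d_{Y}\sigma_{1}=d_{Y}d_{X}\sigma_{1}+d_{[X,Y]}\sigma_{1}$ and using $d_{X}\sigma_{1}\in\Gamma s_{1}$ together with the fact that $d_{Y}$ applied to a section of $s_{1}$ stays in $d\sigma_{1}(T_{2})\oplus s_{1}$. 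The bracket $[X,Y]$ decomposes into $T_{1}$ and $T_{2}$ parts; the $T_{1}$ part contributes to $s_{1}$ and the $T_{2}$ part to $d\sigma_{1}(T_{2})$. Thus $d_{X}$ preserves $D_{1}$, which is precisely the statement $\mathcal{N}(X)|_{D_{1}}=0$, i.e. $D_{1}$ is constant along the leaves of $T_{1}$.

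For the converse, I would assume $D_{1}$ is constant along $T_{1}$ and recover the two channel conditions. Since $s_{1}\le D_{1}$ and $\hat{s}_{1}\le D_{1}$, constancy of $D_{1}$ along $T_{1}$ forces $d_{X}\sigma_{1}\in\Gamma D_{1}$ and $d_{X}\hat{\sigma}_{1}\in\Gamma D_{1}$ for $X\in\Gamma T_{1}$. The point is to upgrade membership in $D_{1}$ to membership in $s_{1}$ (respectively $\hat{s}_{1}$). Here I would use the curvature sphere structure: $d_{X}\sigma_{1}$ always lies in $f=s_{1}\oplus s_{2}$ by the contact condition, while $D_{1}\cap f$ needs to be analysed. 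Because $s_{0}=f\cap\hat{f}$ is a common sphere and $s_{0}$ nowhere coincides with the curvature spheres, the intersection $D_{1}\cap f$ should reduce to $s_{1}$ (and $D_{1}\cap\hat{f}$ to $\hat{s}_{1}$); combining $d_{X}\sigma_{1}\in\Gamma(D_{1}\cap f)=\Gamma s_{1}$ then yields the channel condition for $f$, and symmetrically for $\hat{f}$.

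The main obstacle I anticipate is precisely this last intersection computation in the converse: verifying that $D_{1}\cap f=s_{1}$ rather than a larger subbundle. This requires using that $f$, $\hat{f}$ are umbilic-free with $s_{0}$ transverse to all curvature spheres, so that $d\sigma_{1}(T_{2})$ meets $f$ only in $s_{1}$ and the three summands of $D_{1}$ are in sufficiently general position. Once this transversality is pinned down using Lemma~\ref{lem:innprods} and the signature $(2,1)$ of the Dupin cyclide subbundle, the rest is bookkeeping; the forward direction is essentially a derivative commutation argument and should be routine.
```
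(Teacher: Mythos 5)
Your plan is correct, and since the paper gives no explicit proof of this theorem (it is deduced ``in a straightforward manner'' from the preceding material), your direct verification is essentially the intended argument. The forward direction as you describe it works: choosing the lift with $d|_{T_{1}}\sigma_{1}=0$ (as in Section~\ref{sec:channel}) makes the commutation computation $d_{X}d_{Y}\sigma_{1}=d_{Y}d_{X}\sigma_{1}+d_{[X,Y]}\sigma_{1}$ even cleaner than your version, but your bookkeeping with the bracket decomposition is fine. One small caution: $\mathcal{N}$ is defined in the paper for the Lie cyclide splitting $S_{1}\oplus_{\perp}S_{2}$, not for $D_{1}\oplus_{\perp}D_{1}^{\perp}$, so writing $\mathcal{N}(X)|_{D_{1}}=0$ is an abuse of notation, though the analogous decomposition of $d$ is what you mean.

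The step you flag as the main obstacle --- showing $D_{1}\cap f=s_{1}$ and $D_{1}\cap\hat{f}=\hat{s}_{1}$ --- is genuinely needed, but it closes in one line and does not require Lemma~\ref{lem:innprods} or a transversality analysis of $d\sigma_{1}(T_{2})$ against $f$. The paper records, immediately after Definition~\ref{def:ribcyc}, that
\[ D_{1}^{\perp}=\langle \sigma_{0}, d_{X}\sigma_{0}, d_{X}d_{X}\sigma_{0}\rangle, \]
so in particular $s_{0}\le D_{1}^{\perp}$. Since $D_{1}$ carries a nondegenerate metric of signature $(2,1)$, one has $D_{1}\cap D_{1}^{\perp}=\{0\}$, hence $s_{0}\cap D_{1}=\{0\}$. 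Writing $f=s_{0}\oplus s_{1}$ with $s_{1}\le D_{1}$, any section $a\sigma_{0}+b\sigma_{1}\in\Gamma D_{1}$ forces $a=0$, so $f\cap D_{1}=s_{1}$, and symmetrically $\hat{f}\cap D_{1}=\hat{s}_{1}$. Combined with $d_{X}\sigma_{1}\in\Gamma f$ and $d_{X}\hat{\sigma}_{1}\in\Gamma\hat{f}$ for $X\in\Gamma T_{1}$ (which uses $\hat{T}_{1}=T_{1}$ from the Ribaucour hypothesis) and the assumed constancy of $D_{1}$ along the leaves of $T_{1}$, this yields $d_{X}\sigma_{1}\in\Gamma s_{1}$ and $d_{X}\hat{\sigma}_{1}\in\Gamma\hat{s}_{1}$, which is the channel condition for both surfaces by Proposition~\ref{prop:legchannel}. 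With that substitution your argument is complete.
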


In fact, if $D_{i}$ is constant along the leaves of $T_{i}$, then $D_{i}$ is exactly the 1-parameter family of Dupin cyclides arising from Theorem~\ref{thm:main}. 

\subsection{Darboux transforms}
We shall now recall the construction of Darboux transforms of $\Omega_{0}$-surfaces. 

Suppose that $f$ is an $\Omega_{0}$-surface with isothermic curvature sphere congruence $s_{1}$. Let $\eta\in \Omega^{1}(s_{1}\wedge f^{\perp})$ be the middle potential of $f$. We then have that $\{d+t\eta\}_{t\in \mathbb{R}}$ is a 1-parameter family of flat connections. The flatness of these connections implies that they admit many parallel sections. Suppose that $\hat{s}$ is a parallel subbundle of $d+m\eta$ for $m\in \mathbb{R}\backslash \{0\}$. Let $s_{0}:= f\cap \hat{s}^{\perp}$ and $\hat{f}:= s_{0}\oplus \hat{s}$. Then it was shown in~\cite{P2017} that $\hat{f}$ is a Legendre map and furthermore an $\Omega_{0}$-surface with isothermic curvature sphere congruence $\hat{s}$. We call $\hat{f}$ a \textit{Lie-Darboux transform of $f$ with parameter $m$}. 

\begin{theorem}
Any Lie-Darboux transformation of a channel surface is a channel surface with the same circular curvature direction.
\end{theorem}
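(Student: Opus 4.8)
The plan is to exploit the special form that the middle potential of a channel surface takes, and to observe that this potential vanishes precisely on the circular curvature direction; the Lie-Darboux construction then transports this degeneracy to $\hat{f}$, after which Proposition~\ref{prop:legchannel} finishes the argument.

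First I would fix the gauge. Since $f$ is a channel surface, I may assume without loss of generality that $T_{1}$ is the circular curvature direction, so that $s_{1}$ is the isothermic curvature sphere congruence and the middle potential can be taken in the form~(\ref{eqn:chanform}), namely $\eta = \sigma_{1}\wedge \star d\sigma_{1}$ for a lift $\sigma_{1}\in\Gamma s_{1}$ with $d|_{T_{1}}\sigma_{1}=0$. The key feature of this choice, already recorded in the derivation of~(\ref{eqn:chanform}), is that $\eta(T_{1})=0$: for $X\in\Gamma T_{1}$ the Hodge star acts as the identity on $T_{1}^{*}$, whence $(\star d\sigma_{1})(X)=d_{X}\sigma_{1}=0$ and therefore $\eta(X)=0$.

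Next I would feed this into the definition of the Lie-Darboux transform. By construction $\hat{s}$ is a parallel subbundle of $d+m\eta$, so any lift $\hat{\sigma}\in\Gamma\hat{s}$ satisfies $(d+m\eta)\hat{\sigma}\in\Omega^{1}(\hat{s})$. Restricting to $X\in\Gamma T_{1}$ and using that $\eta$ annihilates this direction yields
\[ d_{X}\hat{\sigma} = (d_{X}+m\,\eta(X))\hat{\sigma}\in\Gamma\hat{s}, \]
so that $\hat{s}$ is constant along the leaves of $T_{1}$. This is the heart of the argument: because $\eta$ is trivial on the circular direction, parallel transport along $T_{1}$ reduces to the trivial connection and the constancy of the curvature sphere is inherited.

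Finally I would identify $T_{1}$ as a curvature subbundle of $\hat{f}$. The Lie-Darboux construction guarantees that $\hat{f}$ is an $\Omega_{0}$-surface whose isothermic curvature sphere congruence is $\hat{s}$, so $\hat{s}$ is one of the two curvature sphere congruences of $\hat{f}$ and $\hat{f}$ is umbilic-free. The relation $d_{X}\hat{\sigma}\in\Gamma\hat{s}\subseteq\Gamma\hat{f}$ for $X\in\Gamma T_{1}$ shows $T_{1}$ lies in the curvature subbundle of $\hat{s}$; since both are rank one away from umbilic points, they coincide, so $T_{1}$ is exactly that curvature subbundle. Thus $\hat{s}$ is a curvature sphere congruence of $\hat{f}$ constant along the leaves of its own curvature subbundle $T_{1}$, and Proposition~\ref{prop:legchannel} gives that $\hat{f}$ is a channel surface with the same circular curvature direction $T_{1}$. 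I expect the only delicate point to be the bookkeeping of this last paragraph—confirming that the direction along which $\hat{s}$ is constant is genuinely a curvature subbundle of $\hat{f}$ and not merely a direction along which some non-curvature sphere happens to be constant—while the conceptual crux is simply the observation that $\eta(T_{1})=0$.
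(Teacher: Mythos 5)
Your proposal is correct and follows essentially the same route as the paper: both hinge on the observation that the middle potential of a channel surface satisfies $\eta(T_{1})=0$, so that parallelness of $\hat{s}$ for $d+m\eta$ forces $\hat{s}$ to be constant along the leaves of $T_{1}$. Your final paragraph identifying $T_{1}$ as the curvature subbundle of $\hat{s}$ in $\hat{f}$ merely makes explicit a step the paper's proof leaves implicit, and it is carried out correctly.
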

\begin{proof}
Suppose that $f$ is a channel surface with circular direction $T_{1}$ and suppose that $\hat{s}$ is a parallel subbundle of $d+m\eta$, i.e., for some $\hat{\sigma}\in \Gamma \hat{s}$, $d\hat{\sigma}=-m\eta\hat{\sigma}$. Then since $f$ is a channel surface with circular direction $T_{1}$, one has that $\eta(T_{1})=0$. Thus, $d|_{T_{1}}\hat{\sigma}=0$. Hence, $\hat{s}$ is constant along the leaves of $T_{1}$ and thus, $\hat{f}$ is a channel surface with circular direction $T_{1}$. 
\end{proof}

\begin{theorem}\label{thm:darboux}
Given a Ribaucour pair of channel surfaces with corresponding circular curvature directions, we may choose their $\Omega_{0}$-structures so that this is a Lie-Darboux pair. 
\end{theorem}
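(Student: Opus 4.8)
The plan is to reverse-engineer the Lie-Darboux construction: given a Ribaucour pair of channel surfaces $f$ and $\hat{f}$ with corresponding circular curvature direction $T_1$, I must produce middle potentials (equivalently, normalised lifts of the circular curvature spheres) so that $\hat{f}$ arises as a parallel subbundle of $d+m\eta$ for some nonzero $m$. By Proposition~\ref{prop:legchannel}, the circular curvature sphere congruences are $s_1$ and $\hat{s}_1$, each constant along the leaves of $T_1$; by Definition~\ref{def:rib} the two surfaces share the enveloped sphere congruence $s_0 = f\cap\hat{f}$. The natural $\Omega_0$-structure on $f$ is the one built in~(\ref{eqn:chanform}), namely $\eta = \sigma_1\wedge\star d\sigma_1$ for a lift $\sigma_1\in\Gamma s_1$ with $d|_{T_1}\sigma_1 = 0$, so that $s_1$ is the isothermic curvature sphere congruence. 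The freedom I retain is the rescaling $\sigma_1\mapsto\mu\sigma_1$ with $d|_{T_1}\mu = 0$, and this rescaling (which moves $\eta$ off its gauge orbit) is exactly what I expect to exploit.

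First I would fix lifts $\hat\sigma_1\in\Gamma\hat{s}_1$ with $d|_{T_1}\hat\sigma_1 = 0$ and check that $\hat{s}_1$ is a candidate parallel subbundle: I need a lift $\hat\sigma$ and a constant $m\neq 0$ with $d\hat\sigma = -m\eta\hat\sigma$, where $\eta\hat\sigma = (\sigma_1,\hat\sigma)\star d\sigma_1 - (\star d\sigma_1,\hat\sigma)\sigma_1$. Along $T_1$ both sides vanish automatically since $\eta(T_1)=0$ and $d|_{T_1}\hat\sigma_1=0$, so the condition is only a constraint along $T_2$. The hypothesis that the circular directions correspond and that $f,\hat{f}$ form a Ribaucour pair forces, via Lemma~\ref{lem:ribpar}, a lift $\hat\sigma$ of $\hat{s}_1$ whose $T_2$-derivative lies in $(s_1\oplus\hat{s}_1)^\perp$; comparing this with $\eta\hat\sigma$ and using the splitting $f^\perp/f = f_1/f\oplus_\perp f_2/f$ should pin down the required proportionality, with the scaling of $\sigma_1$ absorbing the value of $m$. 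The identification $s_0 = f\cap\hat{s}^\perp$ then matches the Darboux construction recalled above, giving $\hat{f} = s_0\oplus\hat{s}$.

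The key steps, in order, are: (i) record that both $s_1$ and $\hat{s}_1$ are $T_1$-constant and set up the middle potential $\eta$ of $f$ with a lift $\sigma_1$ to be rescaled; (ii) translate the Ribaucour condition via Lemma~\ref{lem:ribpar} into the statement that a suitable lift $\hat\sigma$ of $\hat{s}_1$ has $d\hat\sigma\in\Omega^1((s_1\oplus\hat{s}_1)^\perp)$; (iii) expand $\eta\hat\sigma$ and match it, component by component in the Lie cyclide splitting, against $-\tfrac{1}{m}d\hat\sigma$, solving for the rescaling function $\mu$ on $\sigma_1$ and the constant $m$; and (iv) conclude that $\hat{s}=\hat{s}_1$ is parallel for $d+m\eta$, so that $\hat{f}$ is the Lie-Darboux transform of $f$ with parameter $m$, with $\hat{s}_1$ as its isothermic curvature sphere congruence. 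The main obstacle I anticipate is step (iii): showing that the single scalar function $\mu$ (constant along $T_1$) together with one constant $m$ can simultaneously satisfy the proportionality along all of $T_2$. This requires that the ratio $(\star d_Y\sigma_1,\hat\sigma)/(\sigma_1,\hat\sigma)$ relating the two components of $d\hat\sigma$ be consistent with a single global constant $m$ after rescaling; I expect this to follow from the compatibility already guaranteed by the coincidence of the Ribaucour cyclide congruences $D_1 = \hat{D}_1$ (Theorem~\ref{thm_Di}), which encodes precisely that the two channel-sphere curves sit in a common one-parameter family of Dupin cyclides, forcing the derivative $d\hat\sigma$ to align with $\star d\sigma_1$ up to the admissible gauge freedom.
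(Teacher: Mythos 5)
Your setup (steps (i)--(ii)) is sound, but the proof is not complete: step (iii), which you yourself flag as the main obstacle, is exactly where all the content lies, and you leave it as an expectation rather than an argument. To close it you would need the following chain: with lifts $\sigma_{1}\in\Gamma s_{1}$, $\hat{\sigma}_{1}\in\Gamma\hat{s}_{1}$ as in Lemma~\ref{lem:ribpar}, normalised so that $(\sigma_{1},\hat{\sigma}_{1})=-1$ (a normalisation you never fix, though you need it), Proposition~\ref{prop:ribcyc} gives $d_{Y}\hat{\sigma}_{1}\in\Gamma(s_{1}\oplus\hat{s}_{1}\oplus d\sigma_{1}(T_{2}))$ for $Y\in\Gamma T_{2}$, and the orthogonality relations $(d_{Y}\hat{\sigma}_{1},\sigma_{1})=(d_{Y}\hat{\sigma}_{1},\hat{\sigma}_{1})=0$ then kill the $s_{1}$- and $\hat{s}_{1}$-components, so $d_{Y}\hat{\sigma}_{1}=c\,d_{Y}\sigma_{1}$ for a function $c$; next, since $d_{X}\hat{\sigma}_{1}=0$ for $X\in\Gamma T_{1}$ (which itself requires a short argument combining the channel condition $d_{X}\hat{\sigma}_{1}\in\Gamma\hat{s}_{1}$ with the normalisation), comparing $d_{X}d_{Y}\hat{\sigma}_{1}=d_{[X,Y]}\hat{\sigma}_{1}$ with $d_{X}(c\,d_{Y}\sigma_{1})$ shows $d_{X}c=0$, and the immersion condition shows $c$ is nowhere zero; only then can you set $\mu=|c|^{1/2}$, $m=\pm 1$ and conclude that the rescaled middle potential does the job. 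Citing the coincidence $D_{1}=\hat{D}_{1}$ is the right input, but none of this derivation appears in your write-up, so as it stands the proposal is a plan with its central step missing.

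It is worth noting that the paper avoids the matching problem entirely by not insisting on the middle potential~(\ref{eqn:chanform}): with the same normalised lifts ($(\sigma_{1},\hat{\sigma}_{1})=-1$, $d|_{T_{1}}\sigma_{1}=d|_{T_{1}}\hat{\sigma}_{1}=0$) it simply defines $\eta:=\sigma_{1}\wedge d\hat{\sigma}_{1}$. Closedness is immediate, since $d\eta=d\sigma_{1}\curlywedge d\hat{\sigma}_{1}$ and every term contains a vanishing $T_{1}$-derivative, and parallelity is a one-line computation: $d\hat{\sigma}_{1}+\eta\hat{\sigma}_{1}=d\hat{\sigma}_{1}+(\sigma_{1},\hat{\sigma}_{1})d\hat{\sigma}_{1}=0$, giving a Lie-Darboux pair with $m=1$ and no proportionality to solve for. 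Your $\eta$ would, after the rescaling above, coincide with $-c$ times the middle potential, i.e.\ the two constructions agree, but the paper's choice of gauge collapses your step (iii) into a single line; the theorem only asks to \emph{choose} the $\Omega_{0}$-structures, so there is no need to start from the middle potential and then fight to recover the constant $m$.
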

\begin{proof}
Suppose that $f$ and $\hat{f}$ are a Ribaucour pair of channel surfaces with circular curvature direction $T_{1}$. By Lemma~\ref{lem:ribpar}, since $f$ and $\hat{f}$ are a Ribaucour pair, we may choose lifts $\sigma_{1}\in \Gamma s_{1}$ and $\hat{\sigma}_{1}\in \Gamma \hat{s}_{1}$ such that
\[ d\sigma_{1}, d\hat{\sigma}_{1}\in \Omega^{1}((s_{1}\oplus \hat{s}_{1})^{\perp}).\]
Without loss of generality, we may assume that $(\sigma_{1},\hat{\sigma}_{1})=-1$. Since $f$ and $\hat{f}$ are channel surfaces with circular direction $T_{1}$, we must also have that $d|_{T_{1}}\sigma_{1}=d|_{T_{1}}\hat{\sigma}_{1}=0$. Now let 
\[ \eta:= \sigma_{1}\wedge d\hat{\sigma}_{1}.\]
Then 
\[ d\eta = d\sigma_{1}\curlywedge d\hat{\sigma}_{1} = d|_{T_{1}}\sigma_{1}\curlywedge d|_{T_{2}}\hat{\sigma} +d|_{T_{2}}\sigma_{1}\curlywedge d|_{T_{1}}\hat{\sigma} =0.\]
Hence, $\eta$ defines an $\Omega_{0}$-structure on $f$. Furthermore, 
\[ d\hat{\sigma}_{1} + \eta\hat{\sigma}_{1} = d\hat{\sigma}_{1} + (\sigma_{1},\hat{\sigma}_{1})d\hat{\sigma}_{1}= 0.\]
Thus, $\hat{f}$ is a Lie-Darboux transform of $f$. 
\end{proof}

\section{Symmetry breaking}
\label{sec:sym}
In~\cite{BH2006} a definition of Ribaucour pairs of $k$-dimensional submanifolds in the conformal $n$-sphere is given. It is shown that for appropriately constructed Legendre lifts, two $k$-dimensional submanifolds are a Ribaucour pair if and only if there Legendre lifts form a Ribaucour pair. In this section, using Theorem~\ref{thm:main}, we quickly recover this result in the case of curves in the conformal 3-sphere. To do this, we break symmetry as explained in detail in \cite[Sec.\,2.2]{BHPR2017}. 

Let $\mathfrak{p} \in \mathbb{R}^{4,2}$ be a timelike vector. A curve in a conformal geometry $\langle \mathfrak{p} \rangle^\perp$ can be interpreted as a sphere curve $s: I \rightarrow \mathbb{P}(\mathcal{L})$, which takes values in $\langle \mathfrak{p} \rangle^\perp$. By the construction of Section~\ref{sec:channel}, one obtains a Legendre immersion parametrising this curve. Furthermore, $s$ is one of the curvature sphere congruences of this Legendre immersion. 

Conversely, suppose that $f$ is an umbilic-free Legendre map such that one of the curvature sphere congruences, say $s_{1}$, satisfies $s_{1}\perp \mathfrak{p}$. Thus, $s_{1}=f\cap \langle \mathfrak{p}\rangle^{\perp}$. Now $d_{X}\sigma_{1}\in \Gamma f$ for all $X\in \Gamma T_{1}$ and $\sigma_{1}\in \Gamma s_{1}$. On the other hand, $(d_{X}\sigma_{1},\mathfrak{p}) = d_{X}(\sigma_{1},\mathfrak{p})=0$, and thus $d_{X}\sigma_{1} \in \Gamma s_{1}$. Thus, $s_{1}$ is constant along the leaves of $T_{1}$ and projects to a curve in the conformal geometry of $\langle\mathfrak{p}\rangle^{\perp}$. We have thus arrived at the following proposition:
 
\begin{proposition}
An umbilic-free Legendre map parametrises a regular curve in the conformal geometry $\langle\mathfrak{p}\rangle^{\perp}$ if and only if one of the curvature sphere congruences $s_{i}$ satisfies $s_{i}\perp \mathfrak{p}$.
\end{proposition}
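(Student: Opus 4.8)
The plan is to prove the biconditional by establishing each direction separately, using the characterisation of channel surfaces already developed in Section~\ref{sec:channel} (in particular Proposition~\ref{prop:legchannel}) together with the definition of the solder form $\beta$. The statement asserts that an umbilic-free Legendre map projects to a regular curve in the conformal geometry $\langle \mathfrak{p}\rangle^\perp$ precisely when one curvature sphere congruence $s_i$ is orthogonal to the fixed timelike vector $\mathfrak{p}$. Since the forward-symmetry-breaking direction has essentially been carried out in the paragraph preceding the proposition (a Legendre lift of a curve has the curve's point-sphere congruence as a curvature sphere lying in $\langle \mathfrak{p}\rangle^\perp$), the substance lies in the converse.

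For the converse direction, suppose $s_1 \perp \mathfrak{p}$. First I would observe that $s_1 = f \cap \langle \mathfrak{p}\rangle^\perp$, which identifies $s_1$ as a congruence of spheres all orthogonal to $\mathfrak{p}$, hence point spheres (or spheres) in the space form $\langle \mathfrak{p}\rangle^\perp$. The key computation is to show $s_1$ is constant along the leaves of its curvature subbundle $T_1$. Choosing any lift $\sigma_1 \in \Gamma s_1$ and $X \in \Gamma T_1$, Definition~\ref{def:curvsph} gives $d_X \sigma_1 \in \Gamma f$, while differentiating the orthogonality relation $(\sigma_1, \mathfrak{p}) = 0$ yields $(d_X \sigma_1, \mathfrak{p}) = 0$, so that $d_X \sigma_1 \in \Gamma (f \cap \langle \mathfrak{p}\rangle^\perp) = \Gamma s_1$. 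Thus $s_1$ is constant along $T_1$, i.e. $T_1$ is a circular curvature direction.

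Having established that $s_1$ is constant along $T_1$, Proposition~\ref{prop:legchannel} immediately tells us $f$ parametrises a channel surface with circular curvature sphere congruence $s_1$. It then remains to interpret this projection in the conformal geometry $\langle\mathfrak{p}\rangle^\perp$: since $s_1$ depends on only the one parameter transverse to $T_1$ and takes values in $\langle\mathfrak{p}\rangle^\perp$, it defines a sphere curve $s_1 : I \to \mathbb{P}(\mathcal{L}) \cap \langle\mathfrak{p}\rangle^\perp$, whose projection to $\langle\mathfrak{p}\rangle^\perp$ is a curve rather than a genuine surface. The regularity of this curve should follow from the umbilic-free hypothesis, which guarantees $\beta$ is injective and hence that $s_1$ varies nondegenerately along its one remaining parameter.

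I expect the main obstacle to be conceptual rather than computational: making precise the sense in which a channel surface whose circular spheres all pass through (or are orthogonal to) $\mathfrak{p}$ degenerates to a \emph{curve} in $\langle\mathfrak{p}\rangle^\perp$, as opposed to a generic channel surface. The point is that orthogonality to the timelike $\mathfrak{p}$ forces the spheres $s_1$ to be point spheres in the space form modelled on $\langle\mathfrak{p}\rangle^\perp$, so that the one-parameter family of point spheres traces out exactly a curve and the envelope construction recovers its Legendre lift. Once this identification is made, the two implications close up symmetrically, and the proof reduces to the short differentiation argument together with an appeal to Proposition~\ref{prop:legchannel}.
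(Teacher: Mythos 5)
Your proposal is correct and takes essentially the same approach as the paper: the forward direction invokes the Section~\ref{sec:channel} construction exactly as the paper does, and the converse rests on the identical computation, combining $d_{X}\sigma_{1}\in\Gamma f$ with $(d_{X}\sigma_{1},\mathfrak{p})=d_{X}(\sigma_{1},\mathfrak{p})=0$ to conclude $d_{X}\sigma_{1}\in\Gamma\bigl(f\cap\langle\mathfrak{p}\rangle^{\perp}\bigr)=\Gamma s_{1}$, so that $s_{1}$ is constant along the leaves of $T_{1}$ and projects to a curve in $\langle\mathfrak{p}\rangle^{\perp}$. Your additional appeal to Proposition~\ref{prop:legchannel} is harmless but not needed, since constancy of $s_{1}$ along $T_{1}$ already yields the conclusion directly, just as in the paper.
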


We now recall the definition of Ribaucour transforms of curves: 
\begin{definition}[\cite{BJMR2016,H2003}]
\label{def_rib_curves}
Two curves form a Ribaucour pair if they envelop a circle congruence.
\end{definition}

\begin{theorem}
\label{thm:curves}
Two non-intersecting regular curves are Ribaucour transforms of each other if and only if there exists a Ribaucour pair of Legendre maps parametrising these curves with corresponding circular curvature directions. 
\end{theorem}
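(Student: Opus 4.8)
The plan is to reduce the statement to Theorem~\ref{thm:main} by funnelling both notions of Ribaucour pair through the single algebraic condition $s^{(1)}\oplus\hat{s}=\hat{s}^{(1)}\oplus s$. By the previous Proposition, a regular curve in $\langle\mathfrak{p}\rangle^{\perp}$ is nothing but a sphere curve $s:I\to\mathbb{P}(\mathcal{L})$ with $s\perp\mathfrak{p}$ (its point-sphere congruence), and the Legendre lift furnished by Section~\ref{sec:channel} has $s$ as a circular curvature sphere congruence with curvature subbundle $TI$; likewise for $\hat{s}$. Hence ``the two curves are parametrised by a Ribaucour pair of Legendre maps with corresponding circular curvature directions'' is, by Theorem~\ref{thm:main}, exactly the statement $s^{(1)}\oplus\hat{s}=\hat{s}^{(1)}\oplus s$, provided the hypothesis of that theorem, namely that $s$ be nowhere orthogonal to $\hat{s}$, is in force. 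So the theorem will follow once I verify that hypothesis and show that the curves envelop a common circle congruence if and only if the same equality holds.

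First I would dispatch the orthogonality hypothesis. Since $\langle\mathfrak{p}\rangle^{\perp}$ has signature $(4,1)$, it contains no $2$-dimensional isotropic subspace, so two distinct null lines lying in it can never be orthogonal. Thus for lifts $\sigma\in\Gamma s$ and $\hat{\sigma}\in\Gamma\hat{s}$, the non-intersection of the curves (that is, $s\neq\hat{s}$ everywhere) is equivalent to $(\sigma,\hat{\sigma})\neq 0$ everywhere, which is precisely the contact-element condition required to apply Theorem~\ref{thm:main}.

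It then remains to prove that the curves form a Ribaucour pair in the sense of Definition~\ref{def_rib_curves} if and only if $s^{(1)}\oplus\hat{s}=\hat{s}^{(1)}\oplus s$. Here I would use that a circle in $\langle\mathfrak{p}\rangle^{\perp}$ is represented by a rank $3$ subbundle $Z\leq\langle\mathfrak{p}\rangle^{\perp}$ of signature $(2,1)$, its points being the null lines of $Z$, and that a curve envelops $Z$, i.e.\ meets it with matching tangent, exactly when $s^{(1)}\leq Z$. For the forward direction, a common enveloping circle congruence $Z$ contains both $s^{(1)}$ and $\hat{s}^{(1)}$; since the only null line in $s^{(1)}=\langle\sigma,\sigma'\rangle$ is $s\neq\hat{s}$, the sums $s^{(1)}\oplus\hat{s}$ and $\hat{s}^{(1)}\oplus s$ are genuine rank $3$ subbundles of the rank $3$ bundle $Z$, hence both equal $Z$ and therefore each other. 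Conversely, setting $V:=s^{(1)}\oplus\hat{s}=\hat{s}^{(1)}\oplus s$, differentiating $(\sigma,\mathfrak{p})=0$ gives $V\perp\mathfrak{p}$, and both $s^{(1)}$ and $\hat{s}^{(1)}$ lie in $V$, so once $V$ is known to be a circle it serves as the enveloping circle congruence.

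The main obstacle, and the one genuine computation, is checking that $V$ has signature $(2,1)$, so that it really represents a circle rather than a degenerate configuration. With the normalisations $(\sigma,\sigma)=(\sigma,\sigma')=0$, $(\sigma',\sigma')>0$, $(\hat{\sigma},\hat{\sigma})=0$ and $(\sigma,\hat{\sigma})\neq 0$, the Gram matrix of $(\sigma,\sigma',\hat{\sigma})$ has determinant $-(\sigma',\sigma')(\sigma,\hat{\sigma})^{2}<0$; this is nonzero, so $V$ is nondegenerate of rank $3$, and the positive direction $\sigma'$ rules out signature $(0,3)$, forcing $(2,1)$. Chaining this equivalence with Theorem~\ref{thm:main} through the common condition $s^{(1)}\oplus\hat{s}=\hat{s}^{(1)}\oplus s$ then yields the theorem.
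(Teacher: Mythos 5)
Your proposal is correct and follows essentially the same route as the paper: both reduce the statement to Theorem~\ref{thm:main} via the condition $s^{(1)}\oplus\hat{s}=\hat{s}^{(1)}\oplus s$ and identify that condition with the two curves enveloping a common circle congruence. The only difference is that the paper outsources this last equivalence to a citation of \cite{BJMR2016}, whereas you prove it inline via the Gram matrix computation (and you also verify explicitly, using that $\langle\mathfrak{p}\rangle^{\perp}$ has signature $(4,1)$ and so contains no isotropic plane, the nowhere-orthogonality hypothesis of Theorem~\ref{thm:main}, which the paper leaves implicit); both of these checks are correct.
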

\begin{proof}
Let $s,\hat{s}:I\to \mathbb{P}(\mathcal{L})$ be the corresponding curves in $\langle\mathfrak{p}\rangle^{\perp}$. By Theorem~\ref{thm:main}, there exists a Ribaucour pair of Legendre maps parametrising $s$ and $\hat{s}$ with corresponding circular curvature directions if and only if $s^{(1)}\oplus \hat{s} = \hat{s}^{(1)}\oplus s$. However, $s^{(1)}\oplus \hat{s}$ and $\hat{s}^{(1)}\oplus s$ both belong to the conformal geometry $\langle\mathfrak{p}\rangle^{\perp}$, and the condition $s^{(1)}\oplus \hat{s} = \hat{s}^{(1)}\oplus s$ is exactly the condition that $s$ and $\hat{s}$ envelope a circle congruence (see, \cite{BJMR2016}). In fact, the projective lightcone of $s^{(1)}\oplus \hat{s} = \hat{s}^{(1)}\oplus s$ yields exactly this circle congruence. 
\end{proof}

We may interpret Theorem~\ref{thm:curves} in Euclidean geometry as follows: two curves are Ribaucour transforms of each other if and only if tubes of the same radius over these curves are Ribaucour transforms of each other with corresponding circular curvature directions. We shall illustrate this with the following simple example. This example is generated by taking a Ribaucour transform of a straight line. By performing a parallel transformation, one obtains a Ribaucour transform of a cylinder. An explicit parametrisation of this Ribaucour transform is given in~\cite{T2002}. The tangent circles between the Ribaucour pair of curves become tori with the same radii as that of the tubular surfaces. These tori form the Ribaucour cyclide congruence that only depends on one parameter (see Theorem~\ref{thm_Di}). Furthermore, the black circles in Figure~\ref{fig:rib} illustrate how the circular curvature lines on the cylinder and its Ribaucour transform coincide with circular curvature lines on the enveloping tori. 

\begin{figure}[phbt]
\begin{center}
\begin{tabular}{cc}
\includegraphics[scale=0.3]{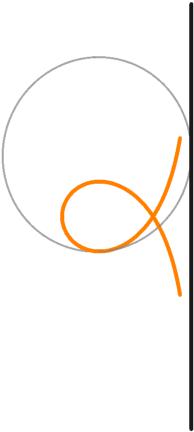} 
\quad\quad\quad\quad\quad&
\includegraphics[scale=0.3]{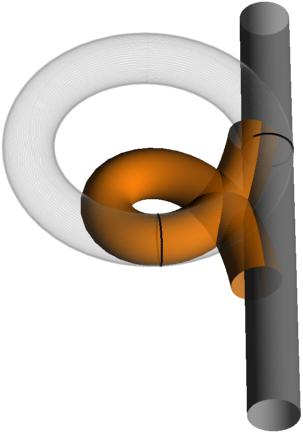} 
\end{tabular}
\caption
{Ribaucour transform of a straight line and, after parallel transformation, Ribaucour transform of a cylinder.}
\label{fig:rib}
\end{center}
\end{figure} 

\begin{remark}
Theorem \ref{thm:darboux} applied to the particular case of curves recovers a result given in \cite{BJMR2016}: for any Ribaucour pair of curves we can choose a polarization such that it becomes a Darboux pair. 
\end{remark}

In light of this section, we may reinterpret Theorem~\ref{thm:main} in the following way. Using isotropy projection (see for example~\cite{B1929,C2008}), one may view spheres as points in $\mathbb{R}^{3,1}$. Thus, sphere curves correspond to curves in $\mathbb{R}^{3,1}$. One may also view $\mathbb{R}^{4,2}$ as the conformal compactification of $\mathbb{R}^{3,1}$. The condition $s^{(1)}\oplus \hat{s} = \hat{s}^{(1)}\oplus s$ is then equivalent to the corresponding curves in $\mathbb{R}^{3,1}$ being Ribaucour transforms of each other.

\bibliographystyle{plain}
\bibliography{bibliography2017}

\end{document}